\newtheorem{theorem}{Theorem}[section]
\newtheorem{proposition}[theorem]{Proposition}
\theoremstyle{definition}
\newtheorem{definition}[theorem]{Definition}
\newtheorem{example}[theorem]{Example}
\theoremstyle{remark}
\newtheorem{remark}[theorem]{Remark}
\numberwithin{equation}{section}
\newcommand{\vp}{\varphi}
\newcommand{\lam}{\lambda}
\newcommand{\ka}{\kappa}
\newcommand{\om}{\omega}
\newcommand{\ol}{\overline}
\newcommand{\kk}{\ensuremath{\mathbb{K}}}
\DeclareMathOperator{\Ann}{Ann}
\DeclareMathOperator{\Tetra}{Tetra}
\DeclareMathOperator{\st}{st}
\DeclareMathOperator{\Z}{Z}
\DeclareMathOperator{\LB}{\mathtt{Lb}}
\DeclareMathOperator{\XLB}{\mathtt{XLb}}
\DeclareMathOperator{\AS}{\mathtt{As}}
\DeclareMathOperator{\XAS}{\mathtt{XAs}}
\DeclareMathOperator{\U}{\mathtt{U}}
\DeclareMathOperator{\XU}{\mathtt{XU}}
\DeclareMathOperator{\Ud}{\mathtt{U_d}}
\DeclareMathOperator{\XUd}{\mathtt{XU_d}}
\DeclareMathOperator{\I}{\mathtt{I}}
\DeclareMathOperator{\J}{\mathtt{J}}
\DeclareMathOperator{\Ph}{\mathtt{\Phi}}
\DeclareMathOperator{\Ps}{\mathtt{\Psi}}
\DeclareMathOperator{\Xliea}{\mathtt{XLie_{As}}}
\DeclareMathOperator{\Xliel}{\mathtt{XLie_{Lb}}}
\DeclareMathOperator{\Liea}{\mathtt{Lie_1}}
\DeclareMathOperator{\Liel}{\mathtt{Lie_2}}
\def\Ker{\operatorname{Ker}}
\def\Hom{\operatorname{Hom}}
\def\As{\operatorname{\textbf{\textsf{Alg}}}}
\def\XAs{\operatorname{\textbf{\textsf{XAlg}}}}
\def\XLie{\operatorname{\textbf{\textsf{XLie}}}}
\def\Lie{\operatorname{\textbf{\textsf{Lie}}}}
\def\Lb{\operatorname{\textbf{\textsf{Lb}}}}
\def\XLb{\operatorname{\textbf{\textsf{XLb}}}}
\def\Di{\operatorname{\textbf{\textsf{Dias}}}}
\def\XDi{\operatorname{\textbf{\textsf{XDias}}}}
\def\C{\operatorname{\textbf{\textsf{C}}}}
\def\Der{\operatorname{Der}}
\def\D{\operatorname{D}}
\def\id{\operatorname{id}}
\DeclareMathOperator{\Set}{\textbf{\textsf{Set}}}
\DeclareMathOperator{\SplExt}{SplExt}
\DeclareMathOperator{\Aut}{Aut}
\def\Act{\operatorname{Act}}
\begin{document}

\title[Actor of a crossed module of dialgebras via tetramultipliers]{Actor of a crossed module of dialgebras via tetramultipliers}

\author[J. M. Casas]{Jos\'e Manuel Casas}
\address[Jos\'e Manuel Casas]{Departmento de Matemática Aplicada I, Universidade de Vigo, E.\ E.\ Forestal, E--36005 Pontevedra, Spain}
\email{jmcasas@uvigo.es}
\author[R. Fern\'andez-Casado]{Rafael Fern\'andez-Casado}
\address[Rafael Fern\'andez-Casado]{Departmento de Álgebra, Universidade de Santiago de Compostela\\
E--15782 Santiago de Compostela, Spain}
\email{rapha.fdez@gmail.com}
\author[X. Garc\'ia-Mart\'inez]{Xabier Garc\'ia-Mart\'inez}
\address[Xabier García-Martínez]{Departamento de Matemáticas, Esc.\ Sup.\ de Enx.\ Informática, Campus de Ourense, Universidade de Vigo, E--32004 Ourense, Spain \& \newline
	Faculty of Engineering, Vrije Universiteit Brussel, Pleinlaan 2, B--1050 Brussel, Belgium}
\email{xabier.garcia.martinez@uvigo.gal}
\author[E. Khmaladze]{Emzar Khmaladze}
\address[Emzar Khmaladze]{The University of Georgia, Kosrava St. 77$^a$, 0171 Tbilsi, Georgia 
 \&
	\newline
A. Razmadze Mathematical Institute, Tbilisi State University, Tamarashvili St.\ 6, 0177 Tbilisi, Georgia}
\email{e.khmal@gmail.com}

\thanks{This work was supported by Ministerio de Economía y Competitividad (Spain), with grant number MTM2016-79661-P. The third author is a Postdoctoral Fellow of the Research Foundation–Flanders (FWO).}

\begin{abstract}
	We study the representability of actions in the category of crossed modules of dialgebras via tetramultipliers. We deduce a pair of dialgebras in order to construct an object which, under certain circumstances, is the actor (also known as the split extension classifier). Moreover, we give give a full description of actions in terms of equations. Finally, we check that under the aforementioned circumstances, the center coincides with the kernel of the canonical map from a crossed module to its actor.
\end{abstract}
\subjclass[2010]{17A30, 17A32, 18A05, 18D05}
\keywords{Dialgebra, crossed module, tetramultiplier, representation, actor}
\maketitle

\section{Introduction}
Crossed modules play a very relevant role in different areas of mathematics, particularly in homotopy theory (see, for instance~\cite{Br}). Ever since the first appearance of the concept in the late 1940s due to Whitehead work~\cite{Wh}, in the particular context of groups, their presence and importance have lately extended to many different scenarios, not just as a tool but as an algebraic object with self relevance. Just as examples of the huge variety of works involving crossed modules in different structures, the reader may check~\cite{BaCr} and~\cite{KassLo} for the case of Lie algebras, Shammu’s PhD thesis~\cite{Sha} for associative algebras or~\cite{LoPi} and~\cite{ShZh} for Leibniz algebras, among many other contributions on the topic.

Bearing in mind~\cite{BaCr} and~\cite{ShZh}, it is worth to mention that crossed modules are a particular kind of strict categorification. More precisely, the equivalence has been proved in~\cite{BrSpen} for the case of groups, in~\cite{BaCr} for Lie algebras, in~\cite{Khma} for associative algebras, and in~\cite{FCPhD} for Leibinz algebras and dialgebras.

It is well-known that internal categories of groups are equivalent to crossed modules. This is true for all categories of groups with operations~\cite{Por} and in fact, this equivalence has been generally understood in all semi-abelian categories~\cite{Jan}, although it is not true in weaker algebraic contexts~{\cite{MaMoSo, Pat}}.

In the semi-abelian framework, actions are just classes of split extensions. In groups, Lie algebras and their crossed modules, the functor $\Act(-, X) \colon \C \to \Set$ that sends an object~$B$ to the set of actions of $X$ over $B$ is representable, i.e., it is naturally isomorphic to $\Hom(-, [X])$ for some object $[X]$; being $\Aut(X)$ in the case of groups and $\Der(X)$ in the case of Lie algebras. In general, the object $[X]$ is known as the actor, and a category where all its objects have an actor is called action representable~\cite{BoJaKe}. This is, in fact, a very strong property: it characterizes the variety of Lie algebras over all non-abelian varieties of non-associative algebras over an infinite field~\cite{GTVV} (another characterization was also given by the right adjointness of the kernel functor $\SplExt_{B}(\C) \to \C$, see~\cite{GaVa2, GaVa}). In other algebraic structures, it became a very interesting question to find which kind of objects play the role of actor, at least under some conditions.

In~\cite{CaInKhLa}, it is given an extension to crossed modules of the adjunction between the unit group functor and the group algebra functor. Additionally, the $2$-dimensional generalizations of the corresponding adjunctions for $\Lie$ vs $\As$ and $\Lb$ vs $\Di$ are presented in {\cite{CFCKhL, FCPhD}}, where the resulting commutative squares of categories and functors are assembled into four parallelepipeds containing the original adjunctions and their natural generalizations:

\[
\begin{tikzcd}[row sep=30, column sep=30]
	& \As \ar[shift right=2]{rrr}{\perp}[swap]{\Liea} \ar[dashed, shift left=3]{dd}[near start]{\I'_i}[swap, near start]{\dashv} \ar[shift left=1]{dl}[rotate=-140, swap]{\subset}[rotate=130]{\vdash} & & & \Lie \ar[shift right=3]{lll}[swap]{\U} \ar[shift right=1]{dd}[near start]{\vdash}[near start, swap]{\I_i} \ar[shift right=1]{dl}[rotate=-50]{\vdash}[rotate=-140]{\subset}\\
	\Di \ar[shift left=1.5]{rrr}{\LB}[swap]{\top} \ar[shift left=2]{dd}[near start]{\J'_i}[swap, near start]{\dashv} \ar[shift left=4]{ur}[rotate=40, near end]{\AS \ } & & & \Lb \ar[shift left=2.5]{lll}{\Ud} \ar[shift right=2]{dd}[near start, swap]{\J_i}[near start]{\vdash} \ar[shift right=3.5]{ur}[rotate=40, near start, swap]{\Liel} &\\
	& \XAs \ar[dashed,shift left=2.5]{uu}[near end]{\Ph'_i} \ar[dashed, shift right=1]{rrr}{\perp}[swap]{\Xliea} \ar[dashed, shift left=2]{dl}[rotate=-140, swap]{\subset}[rotate=130]{\vdash} & & & \XLie \ar[shift right=3]{uu}[near end, swap]{\Ph_i}\ar[dashed, shift right=3]{lll}[swap]{\XU} \ar[shift right=1]{dl}[rotate=-50]{\vdash}[rotate=-140]{\subset}\\
	\XDi \ar[shift left= 3]{uu}[near end]{\Ps'_i} \ar[shift left=2]{rrr}{\XLB}[swap]{\top} \ar[dashed, shift left=2.5]{ur}[rotate=40, near end]{\XAS \ } & & & \XLb \ar[shift right=3]{uu}[near end, swap]{\Ps_i}\ar[shift left=2]{lll}{\XUd} \ar[shift right=3]{ur}[rotate=40, near start, swap]{\Xliel} &
\end{tikzcd}
\]
\[
\begin{tikzcd}[row sep=30, column sep=30]
	& \As \ar[shift right=2]{rrr}{\perp}[swap]{\Liea} \ar[dashed, shift right=3]{dd}[near start, swap]{\I'_i}[near start]{\dashv} \ar[shift left=1]{dl}[rotate=-140, swap]{\subset}[rotate=130]{\vdash} & & & \Lie \ar[shift right=3]{lll}[swap]{\U} \ar[shift left=3]{dd}[near start, swap]{\vdash}[near start]{\I_i} \ar[shift right=1]{dl}[rotate=-50]{\vdash}[rotate=-140]{\subset}\\
	\Di \ar[shift left=1.5]{rrr}{\LB}[swap]{\top} \ar[shift right=2]{dd}[near start, swap]{\J'_i}[near start]{\dashv} \ar[shift left=4]{ur}[rotate=40, near end]{\AS \ } & & & \Lb \ar[shift left=2.5]{lll}{\Ud} \ar[shift left=2]{dd}[near start]{\J_i}[near start, swap]{\vdash} \ar[shift right=3.5]{ur}[rotate=40, near start, swap]{\Liel} &\\
	& \XAs \ar[dashed,shift right=2.5]{uu}[near end, swap]{\Ph'_{i+1}} \ar[dashed, shift right=1]{rrr}{\perp}[swap]{\Xliea} \ar[dashed, shift left=2]{dl}[rotate=-140, swap]{\subset}[rotate=130]{\vdash} & & & \XLie \ar[shift left=1]{uu}[near end]{\Ph_{i+1}}\ar[dashed, shift right=3]{lll}[swap]{\XU} \ar[shift right=1]{dl}[rotate=-50]{\vdash}[rotate=-140]{\subset}\\
	\XDi \ar[shift right= 3]{uu}[near end, swap]{\Ps'_{i+1}} \ar[shift left=2]{rrr}{\XLB}[swap]{\top} \ar[dashed, shift left=2.5]{ur}[rotate=40, near end]{\XAS \ } & & & \XLb \ar[shift left=3]{uu}[near end]{\Ps_{i+1}}\ar[shift left=2]{lll}{\XUd} \ar[shift right=3]{ur}[rotate=40, near start, swap]{\Xliel} &
\end{tikzcd}
\]

Besides the previous generalizations, there exist constructions of the actor in the categories of crossed modules of groups, associative algebras and Lie algebras, as well as the construction of an object that under some circumstances is the actor of the category of crossed modules of Leibniz algebras. Therefore, it seems natural to consider the existence of an object in the category of crossed modules of dialgebras which plays the role of the actor, at least under certain conditions.

The article is organized according to the following structure: In Section~\ref{section2} we recall some basic definitions on actions and crossed modules of dialgebras. In Section~\ref{section3} we obtain an object that extends the dialgebra of tetramultipliers~\cite{FCPhD} to the category of crossed modules of dialgebras (Theorem~\ref{theo_action_tetra_LDmu}) and we give a complete description of an action in terms of equations. Then, in Section~\ref{section4} we find sufficient conditions for the object computed in the previous section to be the actor of a crossed module of dialgebras (Theorem~\ref{theo_equiv_XDi_action}). Finally, in Section~\ref{section5} we prove that the center of a crossed module coincides with the kernel of the canonical morphism from a crossed module of dialgebras to its actor.
\section{Basic definitions}\label{section2}
Let us first recall some basic definitions and results as a brief preliminary introduction for the rest of the paper. All algebras are considered over a commutative ring with unit~$\kk$.
\begin{definition}[\cite{Lo_Di}]
	An \emph{associative dialgebra} (or simply \emph{dialgebra}) $D$ is a $\kk$-module together with two bilinear operations { $\dashv, \, \vdash \colon D\times D\rightarrow D$}, the left product and the right product respectively, satisfying the following axioms:
	\begin{align}
		\label{Di1}\tag{Di1} &(x\dashv y)\dashv z = x\dashv (y\vdash z),\\
		\label{Di2}\tag{Di2} &(x\dashv y)\dashv z = x\dashv (y\dashv z),\\
		\label{Di3}\tag{Di3} &(x\vdash y)\dashv z = x\vdash (y\dashv z),\\
		\label{Di4}\tag{Di4} &(x\dashv y)\vdash z = x\vdash (y\vdash z), \\
		\label{Di5}\tag{Di5} &(x\vdash y)\vdash z = x\vdash (y\vdash z),
	\end{align}
	\noindent for all $x,y,z\in D$.
	In some identities we may use $\ast$ to denote both $\vdash$ and $\dashv$, meaning that the corresponding equality is satisfied for $\ast= \; \vdash$ and $\ast= \; \dashv$.
	
	A dialgebra is called \emph{abelian} if both operations are trivial maps.
	A homomorphism of dialgebras is a $\kk$-linear map that preserves both the left and the right products.
\end{definition}

We denote by $\Ann(D)$ the \emph{annihilator} of $D$, that is the subspace of $D$
\begin{equation*}
	\Ann(D)=\{x \in D \ | \ x \ast y = y \ast x = 0, \ \text{for all } y \in D\},
\end{equation*}
which is obviously an ideal of $D$.
\begin{definition}\label{def_action_dial} {(\cite{CFCKhL})}
	Let $D$ and $L$ be dialgebras. \emph{An action} of $D$ on $L$ consists of four linear maps, where two of them are denoted by the symbol $\dashv$ and the other two by $\vdash$,
	\begin{align*}
		\dashv \colon D\otimes L \rightarrow L, \qquad \dashv \colon L\otimes D \rightarrow L, \qquad \vdash \colon D\otimes L \rightarrow L, \qquad \vdash \colon L\otimes D \rightarrow L,
	\end{align*}
	\noindent such that the $30$ equalities resulting from~\eqref{Di1}--\eqref{Di5} by taking one variable in $D$ and the other two in $L$ (15 equalities), and one variable in	$L$ and the other two in $D$ (15 equalities), hold. For example, if $L$ is an ideal of a dialgebra $D$ (maybe $L = D$), then
	the left and right products in $D$ yield an action of $D$ on $L$.
	
	The action is called \emph{trivial} if all these four maps are trivial.
\end{definition}

For a given action of a dialgebra $D$ on a dialgebra $L$, we can consider the \emph{semidirect product} dialgebra
$L \rtimes D$~\cite{CFCKhL}, which consists of the $\kk$-module $L \oplus D$ together with the left and the right products given by
\begin{align*}
	(a_1,x_1)\ast (a_2,x_2)= (a_1 \ast a_2+x_1\ast a_2 + a_1\ast x_2, x_1\ast x_2).
\end{align*}
for all $x_1,x_2 \in D$, $a_1, a_2 \in L$.
\begin{definition} { (\cite{CFCKhL})}
	A \emph{crossed module of dialgebras} $(L,D,\mu)$ is a
	homomorphism of dialgebras~$\mu\colon L \to D$ together with an action of $D$ on ${L}$ such that
	\begin{align*}
		& \mu(x\ast a)=x\ast \mu(a) \quad \text{and} \quad \mu(a \ast x)=\mu(a)\ast x,\label{XDi1}\tag{XDi1}\\
		& \mu(a_1)\ast a_2=a_1\ast a_2=a_1\ast \mu(a_2). \label{XDi2}\tag{XDi2}
	\end{align*}
	for all $x\in D$, $a_1,a_2\in L$.
	
	A \emph{homomorphism of crossed modules of dialgebras} $(\varphi, \psi) \colon (L,D,\mu)\to (L',D', \mu')$
	is a pair of dialgebra homomorphisms, $\varphi \colon {L} \to {L'}$ and $\psi \colon D\to \D'$, such that they commute with $\mu$ and $\mu'$ and they preserve the actions, i.e., $\varphi(x\ast a)=\psi(x)\ast \vp(a)$ and $\vp(a \ast x)=\vp(a) \ast \psi(x)$ for all $x\in D$, $a\in L$.
\end{definition}
Identity~\eqref{XDi1} is usually known as \emph{equivariance}, while~\eqref{XDi2} is called \emph{Peiffer identity}. The category of crossed modules of dialgebras and their homomorphisms will be denoted by $\XDi$.

A common instance of a crossed module of dialgebras is that of a dialgebra $D$ possessing an ideal $L$, where the inclusion homomorphism ${L}\hookrightarrow D$ is a crossed module of dialgebras and the action of $D$ on ${L}$ is given by left and right products in $D$, as in Definition~\ref{def_action_dial}.

Another common example is that of an $D$-bimodule $L$ (see~\cite{Lo_Di} for the definition) considered as an abelian dialgebra, where then the zero homomorphism $0 \colon L\to D$ is a crossed module.

In~\cite{FCPhD}, it is defined the dialgebra of tetramultipliers, as an analogue to the algebra of bimultipliers~\cite{DeLu}, closely related to the notion of multiplication of a ring by Hochschild~\cite{Hoch}, called bimultiplication by Mac Lane~\cite{MacL}. This dialgebra, just like the algebra of bimultipliers, is the actor in {\bf Dias} under certain circumstances {\cite{FCPhD}}. Since our aim is to obtain a $2$-dimensional generalization of the actor in the category of dialgebras, let us first recall the definition of tetramultipliers.
\begin{definition} [\cite{FCPhD}]
	Let $L$ be a dialgebra. A \emph{tetramultiplier} of $L$ is a quadruple $t=(l^{\dashv},r^{\dashv},l^{\vdash},r^{\vdash})$ of $\kk$-linear maps from $L$ to $L$ such that
	\begin{multicols}{3}
		\begin{enumerate}[label=(\arabic*)]
			\item $l^{\dashv}(a \vdash b) = l^{\dashv}(a) \dashv b$,
			\item $l^{\dashv}(a \dashv b) = l^{\dashv}(a) \dashv b$,
			\item $l^{\vdash}(a \dashv b) = l^{\vdash}(a) \dashv b$,
			\item $l^{\vdash}(a \vdash b) = l^{\dashv}(a) \vdash b$,
			\item $l^{\vdash}(a \vdash b) = l^{\vdash}(a) \vdash b$,
			\columnbreak
			\item $r^{\dashv}(a) \dashv b = a \dashv l^{\vdash}(b)$,
			\item $r^{\dashv}(a) \dashv b = a \dashv l^{\dashv}(b)$,
			\item $r^{\vdash}(a) \dashv b = a \vdash l^{\dashv}(b)$,
			\item $r^{\dashv}(a) \vdash b = a \vdash l^{\vdash}(b)$,
			\item $r^{\vdash}(a) \vdash b = a \vdash l^{\vdash}(b)$,
			\columnbreak
			\item $r^{\dashv}(a \dashv b) = a \dashv r^{\vdash}(b)$,
			\item $r^{\dashv}(a \dashv b) = a \dashv r^{\dashv}(b)$,
			\item $r^{\dashv}(a \vdash b) = a \vdash r^{\dashv}(b)$,
			\item $r^{\vdash}(a \dashv b) = a \vdash r^{\vdash}(b)$,
			\item $r^{\vdash}(a \vdash b) = a \vdash r^{\vdash}(b)$,
		\end{enumerate}
	\end{multicols}
	\noindent for all $a, b \in L$.
\end{definition}
We denote by $\Tetra(L)$ the set of all tetramultipliers of $L$. It is a dialgebra with the induced $\kk$-module structure and the left and right products given by
\begin{align*}
	t_1 \dashv t_2
	& = (l_1^{\dashv} l_2^{\vdash},r_2^{\dashv} r_1^{\dashv},l_1^{\vdash} l_2^{\vdash},r_2^{\dashv} r_1^{\vdash}), \\
	t_1 \vdash t_2
	& = (l_1^{\vdash} l_2^{\dashv},r_2^{\dashv} r_1^{\dashv},l_1^{\vdash} l_2^{\vdash},r_2^{\vdash} r_1^{\dashv})
\end{align*}
\noindent for all $t_1=(l_1^{\dashv},r_1^{\dashv},l_1^{\vdash},r_1^{\vdash}), t_2=(l_2^{\dashv},r_2^{\dashv},l_2^{\vdash},r_2^{\vdash}) \in \Tetra(L)$.

It is not difficult to check that, given $a \in L$, the quadruple $(l_a^{\dashv},r_a^{\dashv},l_a^{\vdash},r_a^{\vdash})$, with
\begin{align*}
	l_a^{\dashv}(a') & = a \dashv a', \qquad r_a^{\dashv}(a') = a' \dashv a, \\
	l_a^{\vdash}(a') & = a \vdash a', \qquad r_a^{\vdash}(a') = a' \vdash a,
\end{align*}
\noindent for all $a' \in L$, is a tetramultiplier in $L$.

\begin{remark}
	Let us remark that a dialgebra $L$ with $\vdash ~ = ~ \dashv$ is just an associative algebras. In this case a tetramultiplier is a bimultiplier of $L$ and the dialgebra of tetramultipliers $\Tetra(L)$ becomes the associative algebra of bimultipliers.
\end{remark}

\section{The dialgebra of tetramultipliers}\label{section3}
In this section we extend to { {\sf XDias}} the dialgebra of tetramultipliers. First we need to translate the notion of a tetramultiplier of a dialgebra into a tetramultiplier between two dialgebras via the action.
\begin{definition}\label{def_tetra_DL}
	Given an action of dialgebras of $D$ on $L$, the set of \emph{tetramultipliers} from~$D$ to $L$, denoted by $\Tetra(D,L)$, consists of all the quadruples $(l^{\dashv},r^{\dashv},l^{\vdash},r^{\vdash})$ of $\kk$-linear maps from $D$ to~$L$, such that
	\begin{multicols}{3}
		\begin{enumerate}[label=(\arabic*)]
			\item $l^{\dashv}(x \vdash y) = l^{\dashv}(x) \dashv y$,
			\item $l^{\dashv}(x \dashv y) = l^{\dashv}(x) \dashv y$,
			\item $l^{\vdash}(x \dashv y) = l^{\vdash}(x) \dashv y$,
			\item $l^{\vdash}(x \vdash y) = l^{\dashv}(x) \vdash y$,
			\item $l^{\vdash}(x \vdash y) = l^{\vdash}(x) \vdash y$,
			\columnbreak
			\item $r^{\dashv}(x) \dashv y = x \dashv l^{\vdash}(y)$,
			\item $r^{\dashv}(x) \dashv y = x \dashv l^{\dashv}(y)$,
			\item $r^{\vdash}(x) \dashv y = x \vdash l^{\dashv}(y)$,
			\item $r^{\dashv}(x) \vdash y = x \vdash l^{\vdash}(y)$,
			\item $r^{\vdash}(x) \vdash y = x \vdash l^{\vdash}(y)$,
			\columnbreak
			\item ${r^{\dashv}(x \dashv y) = x \dashv r^{\vdash}(y)}$,
			\item ${r^{\dashv}(x \dashv y) = x \dashv r^{\dashv}(y)}$,
			\item ${r^{\dashv}(x \vdash y) = x \vdash r^{\dashv}(y)}$,
			\item ${r^{\vdash}(x \dashv y) = x \vdash r^{\vdash}(y)}$,
			\item ${r^{\vdash}(x \vdash y) = x \vdash r^{\vdash}(y)}$,
		\end{enumerate}
	\end{multicols}
	\noindent for all $x, y \in D$.
\end{definition}

Given $a \in L$, the quadruple of $\kk$-linear maps $(l_a^{\dashv},r_a^{\dashv},l_a^{\vdash},r_a^{\vdash})$, where
\begin{align*}
	l_a^{\dashv}(x) & = a \dashv x, \qquad r_a^{\dashv}(x) = x \dashv a, \qquad l_a^{\vdash}(x) = a \vdash x, \qquad r_a^{\vdash}(x) = x \vdash a,
\end{align*}
\noindent for all $x \in D$, is clearly a tetramultiplier from $D$ to $L$. Observe that $\Tetra(D,D)$, with the action of $D$ on itself defined by its left and right products, is exactly $\Tetra(D)$.

Let us assume for the rest of the article that $(L,D,\mu)$ is a crossed module of dialgebras.

$\Tetra(D,L)$ has an obvious $\kk$-module structure. Regarding its dialgebra structure, it is described in the next proposition.
\begin{proposition}
	$\Tetra(D,L)$ is a dialgebra with the left and right products given by
	\begin{align}\label{eq_tetra_DL_products}
		t_1 \dashv t_2
		& = (l_1^{\dashv} \mu l_2^{\vdash},r_2^{\dashv} \mu r_1^{\dashv},l_1^{\vdash} \mu l_2^{\vdash},r_2^{\dashv} \mu r_1^{\vdash}), \\
		t_1 \vdash t_2
		& = (l_1^{\vdash} \mu l_2^{\dashv},r_2^{\dashv} \mu r_1^{\dashv},l_1^{\vdash} \mu l_2^{\vdash},r_2^{\vdash} \mu r_1^{\dashv})
	\end{align}
	\noindent for all $t_1=(l_1^{\dashv},r_1^{\dashv},l_1^{\vdash},r_1^{\vdash}), t_2=(l_2^{\dashv},r_2^{\dashv},l_2^{\vdash},r_2^{\vdash}) \in \Tetra(D,L)$.
\end{proposition}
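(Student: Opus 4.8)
The plan is to establish two things: first, that the quadruples $t_1 \dashv t_2$ and $t_1 \vdash t_2$ are again tetramultipliers from $D$ to $L$, so that the operations are well defined on $\Tetra(D,L)$; and second, that $\dashv$ and $\vdash$ satisfy the five dialgebra axioms \eqref{Di1}--\eqref{Di5}. The guiding observation is that these products are obtained from the products of the dialgebra $\Tetra(L) = \Tetra(D,D)$ recalled above simply by inserting the homomorphism $\mu$ between the two $\kk$-linear maps that are composed; the crossed module conditions on $\mu$ are exactly what is needed to make this insertion compatible with the defining relations.

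For well-definedness I would run through the fifteen equations of Definition~\ref{def_tetra_DL} for each product, the left product being representative. Writing $t_1 \dashv t_2 = (l^\dashv, r^\dashv, l^\vdash, r^\vdash)$ with $l^\dashv = l_1^\dashv \mu l_2^\vdash$, $r^\dashv = r_2^\dashv \mu r_1^\dashv$, $l^\vdash = l_1^\vdash \mu l_2^\vdash$ and $r^\vdash = r_2^\dashv \mu r_1^\vdash$, each verification is a short rewriting chain. For the five ``left'' equations one uses the corresponding relation for the inner map $t_2$, then equivariance \eqref{XDi1} to slide $\mu$ across a product with one variable in $D$, and finally the relation for the outer map $t_1$; for equation~(1),
\begin{align*}
	l^\dashv(x \vdash y) &= l_1^\dashv \mu\big(l_2^\vdash(x) \vdash y\big) = l_1^\dashv\big(\mu(l_2^\vdash(x)) \vdash y\big) \\
	&= l_1^\dashv(\mu(l_2^\vdash(x))) \dashv y = l^\dashv(x) \dashv y.
\end{align*}
For the ten ``right'' equations the same pattern applies, but the Peiffer identity \eqref{XDi2} becomes the key ingredient: after using the relation for $t_2$ one reaches a product $\mu(a)\ast b$ (or $a \ast \mu(b)$) with both $a, b \in L$, which \eqref{XDi2} collapses to $a \ast b$, and reinserting $\mu$ on the other side closes the identity. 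This is essentially the only place where \eqref{XDi2} enters, and tracking which variable lands in $L$ is precisely what dictates whether \eqref{XDi1} or \eqref{XDi2} is invoked.

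The dialgebra axioms are then purely formal. Each of \eqref{Di1}--\eqref{Di5} is checked componentwise, and every component of a triple product is a composition of three of the component maps of $t_1, t_2, t_3$ interleaved with two copies of $\mu$; for example both sides of \eqref{Di1} and of \eqref{Di2} yield $l_1^\dashv \mu l_2^\vdash \mu l_3^\vdash$ in the first component, while in the second component both yield $r_3^\dashv \mu r_2^\dashv \mu r_1^\dashv$ (the $r$-factors composing in reversed order exactly because the defining formulas place the second factor's $r$-map on the outside). Thus each axiom reduces to observing that the prescribed products route the same maps into the same slots, so that associativity of composition finishes the argument; neither the action equations nor the crossed module conditions are needed here.

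The main obstacle is not conceptual but the bookkeeping: there are $15 \times 2$ relations to verify for well-definedness and $5 \times 4$ component identities for the axioms, and the only real care required is to apply \eqref{XDi1} when $\mu$ must pass a mixed $D$--$L$ product and \eqref{XDi2} when a product of two elements of $L$ appears. Once the representative cases above are in place, the remaining ones follow by the same three-step rewriting and can be safely left to the reader.
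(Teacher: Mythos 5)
Your proposal is correct and follows essentially the same route as the paper, whose proof merely asserts that closure of the products follows from~\eqref{XDi1} and~\eqref{XDi2} and that the dialgebra axioms are routine; you supply exactly those details, including the accurate observation that the axioms \eqref{Di1}--\eqref{Di5} hold purely formally by composition of components. The only slight imprecision is that the Peiffer identity~\eqref{XDi2} is really needed only for equations~(6)--(10) of Definition~\ref{def_tetra_DL} (those mixing an $r$-component evaluated at $x$ with a product against $y$), while equations~(11)--(15) follow, like~(1)--(5), from equivariance~\eqref{XDi1} alone; this does not affect the validity of the argument.
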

\begin{proof}
	Let $t_1, t_2 \in \Tetra(D,L)$. It follows immediately from~\eqref{XDi1} and~\eqref{XDi2} that $t_1 \ast t_2 \in \Tetra(D,L)$. Checking the corresponding dialgebra identities is just a matter of routine calculations.
\end{proof}
Now we state the following definition.
\begin{definition}\label{def_tetra_LDmu}
	The set of \emph{tetramultipliers of the crossed module of dialgebras} $(L,D,\mu)$, denoted by $\Tetra(L,D,\mu)$, consists of all octuples $((\lam^{\dashv},\rho^{\dashv},\lam^{\vdash},\rho^{\vdash}),(\ka^{\dashv},\om^{\dashv},\ka^{\vdash},\om^{\vdash}))$ such that
	\begin{align}
		& (\lam^{\dashv},\rho^{\dashv},\lam^{\vdash},\rho^{\vdash}) \in \Tetra(L) \quad \text{and} \quad (\ka^{\dashv},\om^{\dashv},\ka^{\vdash},\om^{\vdash}) \in \Tetra(D),\label{axiom_tetra_LDmu_1}\\
		& \mu \lam^{\dashv} = \ka^{\dashv} \mu, \quad \mu \rho^{\dashv} = \om^{\dashv} \mu, \quad \mu \lam^{\vdash} = \ka^{\vdash} \mu, \quad \mu \rho^{\vdash} = \om^{\vdash} \mu\label{axiom_tetra_LDmu_2}
	\end{align}
	\noindent and
	
	\centering{
		\begin{multicols}{3}	
			\item $\lam^{\dashv}(x \vdash a) = \ka^{\dashv}(x) \dashv a$,
			\item $\lam^{\dashv}(x \dashv a) = \ka^{\dashv}(x) \dashv a$,
			\item $\lam^{\vdash}(x \dashv a) = \ka^{\vdash}(x) \dashv a$,
			\item $\lam^{\vdash}(x \vdash a) = \ka^{\dashv}(x) \vdash a$,
			\item $\lam^{\vdash}(x \vdash a) = \ka^{\vdash}(x) \vdash a$,
			\columnbreak
			\item $\rho^{\dashv}(x \dashv a) = x \dashv \rho^{\vdash}(a)$,
			\item $\rho^{\dashv}(x \dashv a) = x \dashv \rho^{\dashv}(a)$,
			\item $\rho^{\dashv}(x \vdash a) = x \vdash \rho^{\dashv}(a)$,
			\item $\rho^{\vdash}(x \dashv a) = x \vdash \rho^{\vdash}(a)$,
			\item $\rho^{\vdash}(x \vdash a) = x \vdash \rho^{\vdash}(a)$,
			\columnbreak
			\item $\rho^{\dashv}(a) \dashv x = a \dashv \ka^{\vdash}(x)$,
			\item $\rho^{\dashv}(a) \dashv x = a \dashv \ka^{\dashv}(x)$,
			\item $\rho^{\vdash}(a) \dashv x = a \vdash \ka^{\dashv}(x)$,
			\item $\rho^{\dashv}(a) \vdash x = a \vdash \ka^{\vdash}(x)$,
			\item $\rho^{\vdash}(a) \vdash x = a \vdash \ka^{\vdash}(x)$,
		\end{multicols}
		
		\begin{multicols}{3}		
			\item $\om^{\dashv}(x) \dashv a = x \dashv \lam^{\vdash}(a)$,
			\item $\om^{\dashv}(x) \dashv a = x \dashv \lam^{\dashv}(a)$,
			\item $\om^{\vdash}(x) \dashv a = x \vdash \lam^{\dashv}(a)$,
			\item $\om^{\dashv}(x) \vdash a = x \vdash \lam^{\vdash}(a)$,
			\item $\om^{\vdash}(x) \vdash a = x \vdash \lam^{\vdash}(a)$,
			\columnbreak
			\item $\lam^{\dashv}(a \vdash x) = \lam^{\dashv}(a) \dashv x$,
			\item $\lam^{\dashv}(a \dashv x) = \lam^{\dashv}(a) \dashv x$,
			\item $\lam^{\vdash}(a \dashv x) = \lam^{\vdash}(a) \dashv x$,
			\item $\lam^{\vdash}(a \vdash x) = \lam^{\dashv}(a) \vdash x$,
			\item $\lam^{\vdash}(a \vdash x) = \lam^{\vdash}(a) \vdash x$,
			\columnbreak
			\item $\rho^{\dashv}(a \dashv x) = a \dashv \om^{\vdash}(x)$,
			\item $\rho^{\dashv}(a \dashv x) = a \dashv \om^{\dashv}(x)$,
			\item $\rho^{\dashv}(a \vdash x) = a \vdash \om^{\dashv}(x)$,
			\item $\rho^{\vdash}(a \dashv x) = a \vdash \om^{\vdash}(x)$,
			\item $\rho^{\vdash}(a \vdash x) = a \vdash \om^{\vdash}(x)$,	
		\end{multicols}
	}
	
	\flushleft{for all $a \in L$, $x \in D$.}
\end{definition}
Given $x \in D$, it can be readily checked that $((\lam_x^{\dashv},\rho_x^{\dashv},\lam_x^{\vdash},\rho_x^{\vdash}),(\ka_x^{\dashv},\om_x^{\dashv},\ka_x^{\vdash},\om_x^{\vdash}))$, where
\begin{align*}
	\lam_x^{\dashv} (a) & = x \dashv a, \quad \rho_x^{\dashv} (a) = a \dashv x, \quad \lam_x^{\vdash} (a) = x \vdash a, \quad \rho_x^{\vdash} (a) = a \vdash x,\\
	\ka_x^{\dashv} (y) & = x \dashv y, \quad \om_x^{\dashv} (y) = y \dashv x, \quad \ka_x^{\vdash} (y) = x \vdash y, \quad \om_x^{\vdash} (y) = y \vdash x,
\end{align*}
is a tetramultiplier of the crossed module $(L,D,\mu)$.

The $\kk$-module structure of $\Tetra(L,D,\mu)$ is evident, while its dialgebra structure is described as follows.
\begin{proposition}
	$\Tetra(L,D,\mu)$ is a dialgebra with the left and right products given by
	\begin{multline}\label{eq_bider_nqmu_bracket}
		((\lam^{\dashv},\rho^{\dashv},\lam^{\vdash},\rho^{\vdash}),(\ka^{\dashv},\om^{\dashv},\ka^{\vdash},\om^{\vdash})) \ast ((\lam^{'\dashv},\rho^{'\dashv},\lam^{'\vdash},\rho^{'\vdash}),(\ka^{'\dashv},\om^{'\dashv},\ka^{'\vdash},\om^{'\vdash})) \\= ((\lam^{\dashv},\rho^{\dashv},\lam^{\vdash},\rho^{\vdash}) \ast (\lam^{'\dashv},\rho^{'\dashv},\lam^{'\vdash},\rho^{'\vdash}),(\ka^{\dashv},\om^{\dashv},\ka^{\vdash},\om^{\vdash}) \ast (\ka^{'\dashv},\om^{'\dashv},\ka^{'\vdash},\om^{'\vdash})),
	\end{multline}
	\noindent for all quadruples $((\lam^{\dashv},\rho^{\dashv},\lam^{\vdash},\rho^{\vdash}),(\ka^{\dashv},\om^{\dashv},\ka^{\vdash},\om^{\vdash})), ((\lam^{'\dashv},\rho^{'\dashv},\lam^{'\vdash},\rho^{'\vdash}),(\ka^{'\dashv},\om^{'\dashv},\ka^{'\vdash},\om^{'\vdash}))$ belonging to $\Tetra(L,D,\mu)$.
\end{proposition}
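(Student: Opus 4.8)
The plan is to realise $\Tetra(L,D,\mu)$ as a subdialgebra of the product dialgebra $\Tetra(L) \times \Tetra(D)$. By~\eqref{axiom_tetra_LDmu_1} every octuple in $\Tetra(L,D,\mu)$ consists of a quadruple in $\Tetra(L)$ together with a quadruple in $\Tetra(D)$, and the product~\eqref{eq_bider_nqmu_bracket} is by construction the restriction of the componentwise product of $\Tetra(L) \times \Tetra(D)$. Since both $\Tetra(L)$ and $\Tetra(D)$ are dialgebras, so is their direct product, and each of the identities~\eqref{Di1}--\eqref{Di5} holds coordinatewise; hence these identities hold for elements of $\Tetra(L,D,\mu)$ with no further work. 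Consequently the whole content of the statement reduces to closure: one must check that $T \ast T'$ again lies in $\Tetra(L,D,\mu)$ whenever $T, T' \in \Tetra(L,D,\mu)$ and $\ast \in \{\dashv, \vdash\}$.

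Closure means verifying the three defining conditions of Definition~\ref{def_tetra_LDmu} for $T \ast T'$. Condition~\eqref{axiom_tetra_LDmu_1} is automatic, since it merely records that the two coordinate quadruples of $T \ast T'$ lie in $\Tetra(L)$ and $\Tetra(D)$, which holds because those are dialgebras. Condition~\eqref{axiom_tetra_LDmu_2} follows from a one-line composition argument: for the left product the first coordinate of the $L$-part is $\lam^{\dashv}\lam^{'\vdash}$ and that of the $D$-part is $\ka^{\dashv}\ka^{'\vdash}$, so $\mu\lam^{\dashv}\lam^{'\vdash} = \ka^{\dashv}\mu\lam^{'\vdash} = \ka^{\dashv}\ka^{'\vdash}\mu$, using $\mu\lam^{\dashv} = \ka^{\dashv}\mu$ and $\mu\lam^{'\vdash} = \ka^{'\vdash}\mu$; the three remaining coordinates and the right product are identical.

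The bulk of the work is the verification of the thirty mixed equations of Definition~\ref{def_tetra_LDmu} for $T \ast T'$, for each of the two products. My plan is, for each equation, a two-step rewriting chain: first rewrite the inner composite dictated by the product formula using a mixed equation for one of the two tetramultipliers, obtaining an expression that again mixes a single variable of $D$ with a single variable of $L$, and then apply the matching mixed equation for the other tetramultiplier to reach the required composite. For instance, for equation~(1) and the left product one computes $\lam^{\dashv}\lam^{'\vdash}(x \vdash a) = \lam^{\dashv}(\ka^{'\vdash}(x) \vdash a) = \ka^{\dashv}\ka^{'\vdash}(x) \dashv a$, and $\ka^{\dashv}\ka^{'\vdash}$ is precisely the first coordinate of the $D$-part of $T \dashv T'$. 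The equations of $\rho$- and $\om$-type close up in the same manner, with the two composites read in the reverse order, reflecting the order reversal of the right multipliers in the product formulas.

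I expect the main obstacle to be bookkeeping rather than any genuine difficulty. The tetramultiplier axioms occur in redundant pairs---for example~(4) and~(5), or~(1) and~(2), which force coincidences such as $\ka^{'\dashv}(x) \vdash a = \ka^{'\vdash}(x) \vdash a$---and at each intermediate step one must choose the member of the pair whose right-hand side produces exactly the composite demanded by the product formula. Keeping track of these choices, and of which coordinate and which order of composition is relevant for the left versus the right product, is the only delicate point; once the choices are fixed, every one of the sixty verifications terminates in the two rewriting steps illustrated above.
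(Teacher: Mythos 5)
Your proposal is correct and takes essentially the same route as the paper: the paper's entire proof is the observation that the products are inherited componentwise from the dialgebra structures of $\Tetra(L)$ and $\Tetra(D)$, i.e., from the product dialgebra $\Tetra(L)\times\Tetra(D)$, so the identities~\eqref{Di1}--\eqref{Di5} hold coordinatewise. Your closure verifications --- conditions~\eqref{axiom_tetra_LDmu_1}, \eqref{axiom_tetra_LDmu_2} and the thirty mixed identities via the two-step rewriting chains --- are precisely the routine details the paper leaves implicit, and they are carried out correctly.
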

\begin{proof}
	It follows directly from the dialgebra structures of $\Tetra(L)$ and $\Tetra(D)$.
\end{proof}

Now we aim to define a map $\Delta \colon Tetra(D,L) \to Tetra(L, D, \mu)$ and endow it with a structure of crossed module of dialgebras. This is done in the following theorem.
\begin{theorem}\label{theo_action_tetra_LDmu}
	The $\kk$-linear map $\Delta \colon \Tetra(D,L) \to \Tetra(L,D,\mu)$, given by the assignment $(l^{\dashv},r^{\dashv},l^{\vdash},r^{\vdash}) \mapsto ((l^{\dashv} \mu,r^{\dashv} \mu,l^{\vdash} \mu,r^{\vdash} \mu),(\mu l^{\dashv},\mu r^{\dashv},\mu l^{\vdash},\mu r^{\vdash}))$ is a homomorphism of dialgebras. Additionally, there is an action of $\Tetra(L,D,\mu)$ on $\Tetra(D,L)$ given by:
	\begin{align}
		((\lam^{\dashv},\rho^{\dashv},\lam^{\vdash},\rho^{\vdash}),(\ka^{\dashv},\om^{\dashv},\ka^{\vdash},\om^{\vdash})) \dashv (l^{\dashv},r^{\dashv},l^{\vdash},r^{\vdash}) & = (\lam^{\dashv} l^{\vdash},r^{\dashv} \om^{\dashv},\lam^{\vdash} l^{\vdash},r^{\dashv} \om^{\vdash}),\label{eq_action_Xtetra_left1}\\
		((\lam^{\dashv},\rho^{\dashv},\lam^{\vdash},\rho^{\vdash}),(\ka^{\dashv},\om^{\dashv},\ka^{\vdash},\om^{\vdash})) \vdash (l^{\dashv},r^{\dashv},l^{\vdash},r^{\vdash}) & = (\lam^{\vdash} l^{\dashv},r^{\dashv} \om^{\dashv},\lam^{\vdash} l^{\vdash},r^{\vdash} \om^{\dashv}),\label{eq_action_Xtetra_left2}\\
		(l^{\dashv},r^{\dashv},l^{\vdash},r^{\vdash}) \dashv ((\lam^{\dashv},\rho^{\dashv},\lam^{\vdash},\rho^{\vdash}),(\ka^{\dashv},\om^{\dashv},\ka^{\vdash},\om^{\vdash}))& = (l^{\dashv} \ka^{\vdash},\rho^{\dashv} r^{\dashv},l^{\vdash} \ka^{\vdash},\rho^{\dashv} r^{\vdash}),\label{eq_action_Xtetra_right1}\\
		(l^{\dashv},r^{\dashv},l^{\vdash},r^{\vdash}) \vdash ((\lam^{\dashv},\rho^{\dashv},\lam^{\vdash},\rho^{\vdash}),(\ka^{\dashv},\om^{\dashv},\ka^{\vdash},\om^{\vdash}))& = (l^{\vdash} \ka^{\dashv},\rho^{\dashv} r^{\dashv},l^{\vdash} \ka^{\vdash},\rho^{\vdash} r^{\dashv}),\label{eq_action_Xtetra_right2}
	\end{align}
	\noindent for all $((\lam^{\dashv},\rho^{\dashv},\lam^{\vdash},\rho^{\vdash}),(\ka^{\dashv},\om^{\dashv},\ka^{\vdash},\om^{\vdash})) \in \Tetra(L,D,\mu)$, $(l^{\dashv},r^{\dashv},l^{\vdash},r^{\vdash}) \in \Tetra(D,L)$.
	The homomorphism of dialgebras $\Delta$ together with the action above is a crossed module of dialgebras.
\end{theorem}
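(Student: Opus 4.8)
The plan is to verify, in turn, that $\Delta$ lands in $\Tetra(L,D,\mu)$, that it preserves the two products, that formulas~\eqref{eq_action_Xtetra_left1}--\eqref{eq_action_Xtetra_right2} define an action, and finally that $\Delta$ together with this action obeys~\eqref{XDi1} and~\eqref{XDi2}. The only tools needed are the crossed module identities for $(L,D,\mu)$, the homomorphism property $\mu(a\ast b)=\mu(a)\ast\mu(b)$ on $L$, the defining relations of the various tetramultiplier objects, and associativity of composition of $\kk$-linear maps.

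First I would check that $\Delta(l^{\dashv},r^{\dashv},l^{\vdash},r^{\vdash})$ genuinely lies in $\Tetra(L,D,\mu)$. Condition~\eqref{axiom_tetra_LDmu_2} is immediate, since both $\mu(l^{\dashv}\mu)$ and $(\mu l^{\dashv})\mu$ equal $\mu l^{\dashv}\mu$, and likewise for the remaining three. That $(l^{\dashv}\mu,r^{\dashv}\mu,l^{\vdash}\mu,r^{\vdash}\mu)\in\Tetra(L)$, that $(\mu l^{\dashv},\mu r^{\dashv},\mu l^{\vdash},\mu r^{\vdash})\in\Tetra(D)$, and that the $30$ mixed relations of Definition~\ref{def_tetra_LDmu} hold, all follow by substituting the definitions and transporting $\mu$ across products: one uses $\mu(a\ast b)=\mu(a)\ast\mu(b)$ on $L$, equivariance~\eqref{XDi1} on mixed products, and the Peiffer identity~\eqref{XDi2} to trade an action of $\mu(a)$ for a product with $a$. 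For instance, axiom~(1) of $\Tetra(L)$ for $l^{\dashv}\mu$ reads $l^{\dashv}\mu(a\vdash b)=l^{\dashv}\mu(a)\dashv b$, which comes from $\mu(a\vdash b)=\mu(a)\vdash\mu(b)$, relation~(1) of $\Tetra(D,L)$, and~\eqref{XDi2}. That $\Delta$ is a homomorphism is then a direct comparison: computing $\Delta(t_1\ast t_2)$ from~\eqref{eq_tetra_DL_products} and the componentwise product~\eqref{eq_bider_nqmu_bracket}, each entry is an explicit composite (such as $l_1^{\dashv}\mu l_2^{\vdash}\mu$) and the two sides agree verbatim.

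Next, the well-definedness of the action --- that each of the four outputs of~\eqref{eq_action_Xtetra_left1}--\eqref{eq_action_Xtetra_right2} again satisfies the $15$ relations of $\Tetra(D,L)$ --- is the step that actually consumes the defining relations of the tetramultiplier objects. A typical check reduces a composite such as $\lam^{\dashv}l^{\vdash}(x\vdash y)$ first by a relation of $t\in\Tetra(D,L)$ and then by one of the $30$ relations of the octuple in $\Tetra(L,D,\mu)$. By contrast, the $30$ action identities of Definition~\ref{def_action_dial} are purely formal: unwinding each one via~\eqref{eq_action_Xtetra_left1}--\eqref{eq_action_Xtetra_right2},~\eqref{eq_tetra_DL_products} and~\eqref{eq_bider_nqmu_bracket} turns it into an equality of composites of linear maps that holds by associativity alone.

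Finally, the crossed module axioms are where the shape of the formulas pays off. The Peiffer identity~\eqref{XDi2}, namely $\Delta(t_1)\ast t_2=t_1\ast t_2=t_1\ast\Delta(t_2)$, holds because substituting the components of $\Delta(t_i)$ into the action formulas collapses them exactly onto the product~\eqref{eq_tetra_DL_products}; for example $\Delta(t_1)\dashv t_2$ becomes $(l_1^{\dashv}\mu l_2^{\vdash},r_2^{\dashv}\mu r_1^{\dashv},l_1^{\vdash}\mu l_2^{\vdash},r_2^{\dashv}\mu r_1^{\vdash})=t_1\dashv t_2$. Equivariance~\eqref{XDi1}, namely $\Delta(T\ast t)=T\ast\Delta(t)$ and $\Delta(t\ast T)=\Delta(t)\ast T$, follows from the intertwining relations~\eqref{axiom_tetra_LDmu_2}: these are precisely what match the middle $\mu$ produced by $\Delta$ against the maps $\ka,\om$ on one side and $\lam,\rho$ on the other. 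I expect the main obstacle to be organizational rather than conceptual: the well-definedness of the action and the $30$ action identities demand a large number of essentially mechanical verifications, so the real difficulty lies in keeping track of the left and right products and of the eight component maps, not in any single hard argument.
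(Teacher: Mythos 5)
Your overall strategy is exactly the paper's: the paper's proof is a highly compressed version of the same component-by-component verification (well-definedness of $\Delta$, homomorphism property, well-definedness of the action, the $30$ action identities), and your detailed claims about most of these steps are accurate. In particular, the homomorphism property is indeed a verbatim comparison of composites, the Peiffer identity does collapse the action formulas onto the product~\eqref{eq_tetra_DL_products}, and equivariance of $\Delta$ is precisely the intertwining condition~\eqref{axiom_tetra_LDmu_2}. You even treat the crossed module axioms explicitly, which the paper's proof silently omits.

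However, one step as you describe it would fail: the claim that the $30$ identities of Definition~\ref{def_action_dial} are ``purely formal'' and hold ``by associativity alone.'' That is true for many arrangements, but not for those in which the element of $\Tetra(L,D,\mu)$ sits \emph{between} two elements of $\Tetra(D,L)$. For instance, take~\eqref{Di2} with $x=t_1$, $y=T=((\lam^{\dashv},\rho^{\dashv},\lam^{\vdash},\rho^{\vdash}),(\ka^{\dashv},\om^{\dashv},\ka^{\vdash},\om^{\vdash}))$, $z=t_2$. Unwinding via~\eqref{eq_action_Xtetra_left1},~\eqref{eq_action_Xtetra_right1} and~\eqref{eq_tetra_DL_products} gives
\begin{align*}
(t_1\dashv T)\dashv t_2 &= (l_1^{\dashv}\ka^{\vdash}\mu l_2^{\vdash},\; r_2^{\dashv}\mu\rho^{\dashv}r_1^{\dashv},\; l_1^{\vdash}\ka^{\vdash}\mu l_2^{\vdash},\; r_2^{\dashv}\mu\rho^{\dashv}r_1^{\vdash}),\\
t_1\dashv (T\dashv t_2) &= (l_1^{\dashv}\mu\lam^{\vdash} l_2^{\vdash},\; r_2^{\dashv}\om^{\dashv}\mu r_1^{\dashv},\; l_1^{\vdash}\mu\lam^{\vdash} l_2^{\vdash},\; r_2^{\dashv}\om^{\dashv}\mu r_1^{\vdash}),
\end{align*}
and these composites are \emph{not} literally equal as strings of maps: the $\mu$ inserted by the product of $\Tetra(D,L)$ lands on opposite sides of the components of $T$. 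The equality requires $\ka^{\vdash}\mu=\mu\lam^{\vdash}$ and $\mu\rho^{\dashv}=\om^{\dashv}\mu$, i.e.\ condition~\eqref{axiom_tetra_LDmu_2}. This is not fatal — those relations are part of Definition~\ref{def_tetra_LDmu} and you already invoke them for equivariance — but your recipe for this block of verifications must be amended, which is why the paper says these $30$ equalities follow from ``the properties satisfied by the left and the right products in $\Tetra(D,L)$ and $\Tetra(L,D,\mu)$'' rather than from composition bookkeeping alone.
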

\begin{proof}
	Given $(l^{\dashv},r^{\dashv},l^{\vdash},r^{\vdash}) \in \Tetra(D,L)$, it is easy to check that $(l^{\dashv} \mu,r^{\dashv} \mu,l^{\vdash} \mu,r^{\vdash} \mu) \in \Tetra(L)$ and $(\mu l^{\dashv},\mu r^{\dashv},\mu l^{\vdash},\mu r^{\vdash}) \in \Tetra(D)$. Therefore $\Delta$ is well defined. It is a matter of straightforward calculations to prove that it is a homomorphism of dialgebras.
	
	Checking that the action is well defined follows directly from combining the properties from Definition~\ref{def_tetra_DL} and Definition~\ref{def_tetra_LDmu}. As for the 30 equalities from Definition~\ref{def_action_dial}, they can be readily checked by using the properties satisfied by the left and the right products in $\Tetra(D,L)$ and $\Tetra(L,D,\mu)$.
\end{proof}

\begin{remark}\label{remark_Delta}
	Let $D$ and $L$ be associative algebras with the multiplications both denoted by $\cdot$. Let us consider them as dialgebras with the equal left and right multiplications, i.e., $\vdash ~ = ~\cdot ~ = ~ \dashv $. Then the action of $D$ on $L$ is an action of associative algebras and a crossed module of dialgebras $(L,D,\mu)$ is a crossed module of associative algebras. Furthermore, $\Tetra(L, D, \mu)$ coincides with the associative algebra of bimultipliers (see \cite[Definition~2.4]{BoCaDaUs}), whilst $\Tetra(D, L)$ coincides with the associative algebra of crossed bimultipliers of $(L, D, \mu)$ (see \cite[Definition~2.6]{BoCaDaUs}). In addition, if $D$ and $L$ satisfy certain extra conditions, the map (crossed module of associative algebras) $\Delta \colon \Tetra(D,L) \to \Tetra(L,D,\mu)$ is the actor (or split extension classifier) of the crossed module $(L, D, \mu)$ of associative algebras (see \cite[Theorem 5.6]{BoCaDaUs}).
\end{remark}

\section{The actor}\label{section4}

Categories of interest, fistly described by Orzech in~\cite{Or}, are categories of groups with operations together with two more conditions which emulate some sort of associativity. It can be proved that $\Di$ is indeed a category of interest, whilst $\XDi$ is not. However, in~\cite{AsCeUs} it is checked that $\XDi$ is equivalent to $cat^1$-dialgebras, which satisfy the conditions of modified categories of interest (see~\cite{BoCaDaUs}). Therefore, it is reasonable to consider representability of actions in $\XDi$ similarly to what is done for crossed modules of associative algebras in~\cite{BoCaDaUs}.
Nevertheless, $\XDi$ is also a semi-abelian {category}, for which actions are equivalent to split extensions \cite[Lemma 1.3]{BoJaKe}. For this reason, we prefer to address the problem in a different way, from a more combinatorial perspective, via the semidirect product (split extension) of crossed modules of dialgebras.

We consider the \emph{actor} (as used in~\cite{BoCaDaUs, CaDaLa}) or \emph{split extension classifier} (as described in~\cite{BoJaKe}) as an object that represents actions in a given semi-abelian category.

It is important to note that, for a dialgebra $D$, $\Tetra(D)$ is the actor of $D$, but only if some conditions are satisfied, as it is proved in the next result from~\cite{FCPhD}.
\begin{proposition}\label{prop_Di_ann_perfect}
	Let $D$ be a dialgebra such that $\Ann(D)=0$ or $D \dashv D = D =D \vdash D$. Then $\Tetra(D)$ is the actor of $D$.
\end{proposition}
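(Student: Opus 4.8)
The plan is to establish that the canonical map $D \to \Tetra(D)$ sending $x$ to its inner tetramultiplier $t_x = (l_x^\dashv, r_x^\dashv, l_x^\vdash, r_x^\vdash)$ represents the action functor $\Act(-,D)\colon \Di \to \Set$, i.e. that actions of an arbitrary dialgebra $B$ on $D$ correspond bijectively and naturally to dialgebra homomorphisms $B \to \Tetra(D)$. Since $\Di$ is semi-abelian, representing the action functor is exactly what it means for $\Tetra(D)$ to be the actor (split extension classifier), so the whole burden is producing this natural bijection under either of the two hypotheses $\Ann(D)=0$ or $D\dashv D = D = D\vdash D$.

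The forward direction is structural and requires no hypothesis. First I would take an action of $B$ on $D$, consisting of four bilinear maps, and assign to each $b \in B$ the quadruple of $\kk$-linear maps $D \to D$ given by $l^\dashv(a) = b\dashv a$, $r^\dashv(a)= a\dashv b$, $l^\vdash(a)=b\vdash a$, $r^\vdash(a)=a\vdash b$. The fifteen action axioms obtained from \eqref{Di1}--\eqref{Di5} with one variable in $B$ and two in $D$ are precisely the fifteen defining identities of a tetramultiplier, so this quadruple lies in $\Tetra(D)$; call the resulting map $\Phi_b$. Checking that $b \mapsto \Phi_b$ is a dialgebra homomorphism $B \to \Tetra(D)$ amounts to matching the semidirect-product compatibilities (the remaining fifteen action axioms, with two variables in $B$) against the definitions of $\dashv$ and $\vdash$ on $\Tetra(D)$; this is the routine verification that the two products on $\Tetra(D)$ were designed to encode. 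Naturality in $B$ is immediate from the pointwise definition.

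The reverse direction is where the hypotheses enter and is the main obstacle. Given a dialgebra homomorphism $f\colon B \to \Tetra(D)$, writing $f(b) = (l_b^\dashv, r_b^\dashv, l_b^\vdash, r_b^\vdash)$, one defines candidate action maps by $b \dashv a := l_b^\dashv(a)$, $a\dashv b := r_b^\dashv(a)$, and similarly for $\vdash$. The tetramultiplier identities immediately give the fifteen ``one-variable-in-$B$'' action axioms. The difficulty is the other fifteen axioms, those involving two elements $b, b'$ of $B$: these require relating, say, $l_{b\ast b'}^\dashv$ to compositions of the components of $f(b)$ and $f(b')$, and the products on $\Tetra(D)$ only control certain such compositions. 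To pin down the rest I expect to need one of the two hypotheses: when $D\dashv D = D = D\vdash D$ every element of $D$ is a sum of products, so each action value is determined by evaluating a tetramultiplier on a product and invoking its defining identities; when $\Ann(D)=0$ two tetramultiplier components that agree after composing with every left/right multiplication must already agree, forcing the required equalities. I would treat these two cases separately, in each case verifying that the constructed action is well defined and that this assignment is inverse to $\Phi$.

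Finally I would confirm that the two passages are mutually inverse and natural, so that $\Act(-,D) \cong \Hom(-,\Tetra(D))$, which by semi-abelianness is the definition of $\Tetra(D)$ being the actor of $D$. The cleanest exposition separates the hypothesis-free forward construction and its homomorphism property from the hypothesis-dependent recovery of an action from a homomorphism, isolating the case analysis on $\Ann(D)=0$ versus $D\dashv D = D = D\vdash D$ into the single step where surjectivity-of-products or faithfulness-of-multiplication is actually used.
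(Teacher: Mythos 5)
Your proposal is correct and follows essentially the paper's own route: the paper states this proposition as a result imported from~\cite{FCPhD} without reproving it, but its proof of the crossed-module analogue, Theorem~\ref{theo_equiv_XDi_action}, is exactly your outline --- the forward construction is hypothesis-free, while in the converse direction the homomorphism property yields only eight of the fifteen two-variables-in-$B$ action identities, and the remaining seven are forced either by showing the relevant differences lie in $\Ann(D)$ (when $\Ann(D)=0$) or by evaluating tetramultipliers on products (when $D\dashv D=D=D\vdash D$). The one detail your sketch glosses over is that in the perfect case the various failing identities require decomposing elements by \emph{different} products --- some can be verified only on elements of the form $b\dashv c$, others only on elements of the form $b\vdash c$ --- which is precisely why both $D\dashv D=D$ and $D\vdash D=D$ are assumed and neither equality alone would suffice.
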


Under the assumption of some conditions, it is reasonable to take into consideration the crossed module $(\Tetra(D,L),\Tetra(L,D,\mu),\Delta)$ as a suitable option for actor in $\XDi$ (see Proposition~\ref{prop_Di_ann_perfect}). Nevertheless, given two crossed modules of dialgebras, $(M,P,\eta)$ and $(L,D,\mu)$, it would be too audacious to define an action of $(M,P,\eta)$ on $(L,D,\mu)$ directly as a homomorphism from $(M,P,\eta)$ to $(\Tetra(D,L),\Tetra(L,D,\mu),\Delta)$, as we have no option to guarantee that this homomorphism results into a set of actions of $(M,P,\eta)$ on $(L,D,\mu)$.

Actions of crossed modules of different structures have also been described in terms of equations, as it can be checked in~\cite{CaInKhLa} for crossed modules of groups, in~\cite{CaCaKhLa},~\cite{ChShZh} or~\cite{TaSh} for crossed modules of Lie algebras, and in~\cite{CaFCGMKh} for crossed modules of Leibniz algebras. Bearing those examples in mind, it is seemingly reasonable to address the problem by following the next steps: firstly, we consider a homomorphism from $(M,P,\eta)$ to $\ol{\Act}(L,D,\mu) =(\Tetra(D,L),\Tetra(L,D,\mu),\Delta)$, and translate into equations every property satisfied by that homomorphism. As expected, then we need to find the conditions for the existence of that collection of equations to be equivalent to the existence of a homomorphism from $(M,P,\eta)$ to $\ol{\Act}(L,D,\mu)$. The final step is to construct the semidirect product in order to check that these equations induce a set of actions.	

\begin{theorem}\label{theo_equiv_XDi_action}
	Let $(M,P,\eta)$ and $(L,D,\mu)$ in $\XDi$. It can be constructed a homomorphism of crossed modules from $(M,P,\eta)$ to $(\Tetra(D,L), \Tetra(L,D,\mu), \Delta)$, if the following conditions are satisfied:
	\begin{itemize}
		\item [\rm (i)] There exist actions of the dialgebra $P$ (and therefore $M$) on the dialgebras $L$ and $D$. The homomorphism $\mu$
		is $P$-equivariant, that is
		\begin{align}
			\mu(p \ast a) & =p \ast \mu(a), \label{p_equivariant_Di_1} \tag{DiEQ1} \\
			\mu(a \ast p) & =\mu(a) \ast p, \label{p_equivariant_Di_2} \tag{DiEQ2}
		\end{align}
		\noindent and the actions of $P$ and $D$ on $L$ are compatible, that is the 30 equalities resulting from~\eqref{Di1}--\eqref{Di5} by considering all the possible combinations taking one element in $P$, one in~$D$ and one in $L$.
		
		\item[\rm (ii)] There are four $\kk$-bilinear maps $\xi^{\dashv}_1 \colon M\times D\to L$, $\xi^{\vdash}_1 \colon M\times D\to L$, $\xi^{\dashv}_2 \colon D\times M\to L$ and $\xi^{\vdash}_2 \colon D\times M\to L$ such that
		\begin{align}
			\mu \xi^{\dashv}_2(x,m) & =x \dashv m ,\label{action_Di_1a} \tag{DiM1a} \\
			\mu \xi^{\vdash}_2(x,m) & =x \vdash m ,\label{action_Di_1b} \tag{DiM1b} \\
			\mu \xi^{\dashv}_1(m,x) & = m \dashv x,\label{action_Di_1c} \tag{DiM1c} \\
			\mu \xi^{\vdash}_1(m,x) & = m \vdash x,\label{action_Di_1d} \tag{DiM1d}
		\end{align}
		\begin{align}		
			\xi^{\dashv}_2 (\mu(a),m) & = a \dashv m,\label{action_Di_2a} \tag{DiM2a} \\
			\xi^{\vdash}_2 (\mu(a),m) & = a \vdash m,\label{action_Di_2b} \tag{DiM2b} \\
			\xi^{\dashv}_1 (m,\mu(a)) & = m \dashv a,\label{action_Di_2c} \tag{DiM2c} \\
			\xi^{\vdash}_1 (m,\mu(a)) & = m \vdash a,\label{action_Di_2d} \tag{DiM2d}
		\end{align}
		\begin{align}	
			\xi^{\dashv}_2 (x,p \dashv m) & = \xi^{\dashv}_2 (x \dashv p,m) = \xi^{\dashv}_2 (x,p \vdash m), \label{action_Di_3a} \tag{DiM3a} \\
			\xi^{\vdash}_2 (x,p \dashv m) & = \xi^{\dashv}_2 (x \vdash p,m), \label{action_Di_3b} \tag{DiM3b} \\
			\xi^{\vdash}_2 (x,p \vdash m) & = \xi^{\vdash}_2 (x \dashv p,m), \label{action_Di_3c} \tag{DiM3c} \\
			\xi^{\dashv}_1 (p \dashv m, x) & = p \dashv \xi^{\vdash}_1(m,x), \label{action_Di_3d} \tag{DiM3d} \\
			\xi^{\dashv}_1 (p \vdash m, x) & = p \vdash \xi^{\dashv}_1(m,x), \label{action_Di_3e} \tag{DiM3e} \\
			\xi^{\vdash}_1 (p \dashv m, x) & = p \vdash \xi^{\vdash}_1(m,x) = \xi^{\vdash}_1 (p \vdash m, x), \label{action_Di_3f} \tag{DiM3f} \\
			\xi^{\dashv}_2 (x,m \dashv p) & = \xi^{\dashv}_2 (x,m) \dashv p = \xi^{\dashv}_2 (x,m \vdash p), \label{action_Di_3g} \tag{DiM3g} \\
			\xi^{\vdash}_2 (x,m \dashv p) & = \xi^{\vdash}_2 (x,m) \dashv p, \label{action_Di_3h} \tag{DiM3h} \\
			\xi^{\vdash}_2 (x,m \vdash p) & = \xi^{\dashv}_2 (x,m) \vdash p, \label{action_Di_3i} \tag{DiM3i} \\
			\xi^{\dashv}_1 (m \dashv p, x) & = \xi^{\dashv}_1 (m, p \vdash x), \label{action_Di_3j} \tag{DiM3j} \\
			\xi^{\dashv}_1 (m \vdash p, x) & = \xi^{\vdash}_1 (m, p \dashv x), \label{action_Di_3k} \tag{DiM3k} \\
			\xi^{\vdash}_1 (m \dashv p, x) & = \xi^{\vdash}_1 (m, p \vdash x) = \xi^{\vdash}_1 (m \vdash p, x), \label{action_Di_3l} \tag{DiM3l}
		\end{align}
		\begin{align}	
			\xi^{\dashv}_1 (m \dashv m', x) & = m \dashv \xi^{\vdash}_1(m', x), \label{action_Di_4a} \tag{DiM4a} \\
			\xi^{\dashv}_1 (m \vdash m', x) & = m \vdash \xi^{\dashv}_1(m', x), \label{action_Di_4b} \tag{DiM4b} \\
			\xi^{\vdash}_1 (m \dashv m', x) & = m \vdash \xi^{\vdash}_1(m', x) = \xi^{\vdash}_1 (m \vdash m', x), \label{action_Di_4c} \tag{DiM4c} \\
			\xi^{\dashv}_2 (x, m \dashv m') & = \xi^{\dashv}_2 (x, m) \dashv m' = \xi^{\dashv}_2 (x, m \vdash m'), \label{action_Di_4d} \tag{DiM4d} \\
			\xi^{\vdash}_2 (x, m \dashv m') & = \xi^{\vdash}_2 (x, m) \dashv m', \label{action_Di_4e} \tag{DiM4e} \\
			\xi^{\vdash}_2 (x, m \vdash m') & = \xi^{\dashv}_2 (x, m) \vdash m', \label{action_Di_4f} \tag{DiM4f}
		\end{align}
		\begin{align}			
			\xi^{\dashv}_1 (m, x \vdash y) & = \xi^{\dashv}_1 (m, x) \dashv y = \xi^{\dashv}_1 (m, x \dashv y) , \label{action_Di_5a} \tag{DiM5a} \\
			\xi^{\vdash}_1 (m, x \dashv y) & = \xi^{\vdash}_1 (m, x) \dashv y, \label{action_Di_5b} \tag{DiM5b} \\
			\xi^{\vdash}_1 (m, x \vdash y) & = \xi^{\dashv}_1 (m, x) \vdash y = \xi^{\vdash}_1 (m, x) \vdash y , \label{action_Di_5c} \tag{DiM5c} \\
			\xi^{\dashv}_2 (x, m) \dashv y & = x \dashv \xi^{\vdash}_1 (m, y) = x \dashv \xi^{\dashv}_1 (m, y) , \label{action_Di_5d} \tag{DiM5d} \\
			\xi^{\vdash}_2 (x, m) \dashv y & = x \vdash \xi^{\dashv}_1 (m, y), \label{action_Di_5e} \tag{DiM5e} \\
			\xi^{\dashv}_2 (x, m) \vdash y & = x \vdash \xi^{\vdash}_1 (m, y) = \xi^{\vdash}_2 (x, m) \vdash y , \label{action_Di_5f} \tag{DiM5f} \\
			\xi^{\dashv}_2 (x \dashv y, m) & = x \dashv \xi^{\vdash}_2 (y, m) = x \dashv \xi^{\dashv}_2 (y, m) , \label{action_Di_5g} \tag{DiM5g} \\
			\xi^{\dashv}_2 (x \vdash y, m) & = x \vdash \xi^{\dashv}_2 (y, m), \label{action_Di_5h} \tag{DiM5h} \\
			\xi^{\vdash}_2 (x \dashv y, m) & = x \vdash \xi^{\vdash}_2 (y, m) = \xi^{\vdash}_2 (x \vdash y, m) , \label{action_Di_5i} \tag{DiM5i}
		\end{align}
		\noindent for all $m,m'\in M$, $a\in L$, $p\in P$, $x,x'\in D$.
	\end{itemize}
	
	In addition, if one of the following conditions hold the converse statement is also true:
	\begin{align}
		& \Ann(L)=0=\Ann(D), \label{condition1} \tag{CON1} \\
		& \Ann(L)=0 \quad \text{and} \quad D \dashv D = D = D \vdash D, \label{condition2} \tag{CON2} \\
		& L \dashv L = L = L \vdash L \quad \text{and} \quad D \dashv D = D = D \vdash D, \label{condition3} \tag{CON3} \\
		& L \dashv L = L = L \vdash L \quad \text{and} \quad \Ann(D)=0. \label{condition4} \tag{CON4}
	\end{align}
\end{theorem}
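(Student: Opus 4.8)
The plan is to prove the two implications separately: the forward one (the data of (i)--(ii) $\Rightarrow$ a homomorphism) by an explicit construction that uses nothing beyond (i) and (ii), and the converse (a homomorphism $\Rightarrow$ the data) by reading off the structure and then recovering by hand the few identities that a homomorphism into the actor candidate fails to witness, which is exactly the point at which \eqref{condition1}--\eqref{condition4} are needed. For the forward direction I would first build the base component $\psi\colon P\to\Tetra(L,D,\mu)$ by sending $p$ to the octuple whose $\Tetra(L)$-entry is $(\lambda_p^\dashv,\rho_p^\dashv,\lambda_p^\vdash,\rho_p^\vdash)$ with $\lambda_p^\dashv(a)=p\dashv a$, $\rho_p^\dashv(a)=a\dashv p$, $\lambda_p^\vdash(a)=p\vdash a$, $\rho_p^\vdash(a)=a\vdash p$, and whose $\Tetra(D)$-entry is given by the same formulas for the $P$-action on $D$. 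That this octuple lies in $\Tetra(L,D,\mu)$ is precisely the content of (i): the $\Tetra(L)$- and $\Tetra(D)$-conditions are the action identities with one entry in $P$ and two in $L$ (resp.\ $D$), the cross identities of Definition~\ref{def_tetra_LDmu} are the $P$-$D$-$L$ compatibilities, and \eqref{axiom_tetra_LDmu_2} is the $P$-equivariance \eqref{p_equivariant_Di_1}--\eqref{p_equivariant_Di_2}. The top component $\varphi\colon M\to\Tetra(D,L)$ sends $m$ to $(\xi_1^\dashv(m,-),\xi_2^\dashv(-,m),\xi_1^\vdash(m,-),\xi_2^\vdash(-,m))$, whose membership in $\Tetra(D,L)$ is exactly \eqref{action_Di_5a}--\eqref{action_Di_5i}.

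It then remains to verify, using \eqref{eq_tetra_DL_products} and the product formulas for $\Tetra(L)$ and $\Tetra(D)$, that $\psi$ and $\varphi$ are dialgebra homomorphisms, that the square $\Delta\varphi=\psi\eta$ commutes, and that $(\varphi,\psi)$ preserves the actions. The multiplicativity of $\psi$ unwinds to the action identities with two entries in $P$; the multiplicativity of $\varphi$ produces a $\mu$ sandwiched inside the $\xi$'s, which is absorbed by \eqref{action_Di_1a}--\eqref{action_Di_2d} and then matched to \eqref{action_Di_4a}--\eqref{action_Di_4f}; commutativity of the square is \eqref{action_Di_1a}--\eqref{action_Di_2d} read componentwise, using that $M$ acts through $\eta$; and action preservation, computed through \eqref{eq_action_Xtetra_left1}--\eqref{eq_action_Xtetra_right2}, reproduces \eqref{action_Di_3a}--\eqref{action_Di_3l} verbatim. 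No annihilator or perfectness hypothesis intervenes, so this direction holds unconditionally.

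For the converse I would run these identifications backwards: the components of $\psi(p)$ \emph{define} the $P$-actions on $L$ and $D$, those of $\varphi(m)$ define $\xi_1^\ast,\xi_2^\ast$ (and $M$ acts through $\eta$), and then membership of $\psi(p)$ and $\varphi(m)$ in their tetramultiplier dialgebras, multiplicativity of $\psi$ and $\varphi$, commutativity of the square and action preservation return all of (ii) together with the equivariance, the compatibilities, and most of the action axioms of (i). The crux is that the product of $\Tetra(L)$ only ever pairs a left multiplier of one argument with a right multiplier of the other, so it never witnesses the action identities with two entries in $P$ and the remaining entry in the middle slot, such as $(p\dashv a)\dashv p'=p\dashv(a\dashv p')$; the analogous blindness occurs for the action on $D$. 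These finitely many identities have to be produced separately, and this is where the hypotheses enter.

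Concretely, writing $u,v$ for the two sides of a missing $L$-identity, the identities already recovered give $(u-v)\ast b=0=b\ast(u-v)$ for every $b\in L$, so $\Ann(L)=0$ forces $u=v$; whereas if $L\dashv L=L=L\vdash L$ one instead expands the middle $L$-entry as a sum of products and transports the brackets using the identities in hand. The identical dichotomy on $D$ uses $\Ann(D)=0$ or $D\dashv D=D=D\vdash D$, and since the $L$-side and $D$-side gaps are independent, exactly one of the four pairings \eqref{condition1}--\eqref{condition4} is what is required — which is precisely the list in the statement. I expect this last step — pinning down exactly which identities are invisible to $(\varphi,\psi)$ and checking that each is annihilator-trivial or product-generated under the relevant half of the chosen condition — to be the genuinely delicate part of the argument, the rest being bookkeeping already rehearsed in Theorem~\ref{theo_action_tetra_LDmu} and the surrounding propositions.
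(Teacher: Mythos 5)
Your proposal is correct and follows essentially the same route as the paper: the same explicit formulas for $\varphi$ and $\psi$ in the forward direction (with membership, multiplicativity, the square $\Delta\varphi=\psi\eta$, and action preservation matched to the same blocks of identities), and the same converse strategy of reading off the actions and the $\xi$'s from the homomorphism, isolating the finitely many action identities the tetramultiplier products fail to witness, and settling each via the dichotomy $\Ann=0$ (difference lands in the annihilator) versus perfectness (expand the $L$-, resp.\ $D$-, entry as products) independently on $L$ and $D$, which is exactly how the four conditions \eqref{condition1}--\eqref{condition4} arise. One small imprecision: the invisible identities are not only those with the $L$-entry in the middle slot --- the paper's list also includes outer-slot failures such as $(a\vdash p)\vdash q = a\vdash(p\vdash q)$ and $(p\dashv q)\dashv a = p\dashv(q\dashv a)$, and (as the paper notes) curing these by product-expansion genuinely requires both $L\dashv L=L$ and $L\vdash L=L$, since different failures need different decompositions --- but your method handles them in exactly the same way once they are enumerated.
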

\begin{proof}
	Assume that (i) and (ii) hold. Then, we can define a homomorphism of crossed modules $(\vp,\psi)$ from $(M,P,\eta)$ to $\ol{\Act}(L,D,\mu)$ as follows. For any $m \in M$, $\vp(m)=(l^{\dashv}_{m},r^{\dashv}_{m},l^{\vdash}_{m},r^{\vdash}_{m})$, with
	\begin{equation*}
		l^{\dashv}_{m} (x) = \xi^{\dashv}_{1}(m,x), \qquad r^{\dashv}_{m} (x) = \xi^{\dashv}_{2}(x,m), \qquad l^{\vdash}_{m} (x) = \xi^{\vdash}_{1}(m,x), \qquad r^{\vdash}_{m} (x) = \xi^{\vdash}_{2}(x,m),
	\end{equation*}
	\noindent for all $x \in D$. On the other hand, for any $p \in P$, $\psi(p)=((\lam_{p}^{\dashv},\rho_{p}^{\dashv},\lam_{p}^{\vdash},\rho_{p}^{\vdash}),(\ka_{p}^{\dashv},\om_{p}^{\dashv},\ka_{p}^{\vdash},\om_{p}^{\vdash}))$, with
	\begin{align*}
		\lam_{p}^{\dashv}(a) & = p \dashv a, \qquad \rho_{p}^{\dashv}(a) = a \dashv p, \qquad \lam_{p}^{\vdash}(a) = p \vdash a, \qquad \rho_{p}^{\vdash}(a) = a \vdash p,\\
		\ka_{p}^{\dashv}(x) & = p \dashv x, \qquad \om_{p}^{\dashv}(x) = x \dashv p, \qquad \ka_{p}^{\vdash}(x) = p \vdash x, \qquad \om_{p}^{\vdash}(x) = x \vdash p,
	\end{align*}
	\noindent for all $a \in L$, $x \in D$.
	It follows directly from~\eqref{action_Di_5a}--\eqref{action_Di_5i} that $(l^{\dashv}_{m},r^{\dashv}_{m},l^{\vdash}_{m},r^{\vdash}_{m}) \in \Tetra(D,L)$ for all $m \in M$. Besides, $\vp$ is $\kk$-linear and for any $m,m' \in M$,
	\begin{align*}
		\vp(m) \dashv \vp(m') & = (l^{\dashv}_{m} \mu l^{\vdash}_{m'},r^{\dashv}_{m'} \mu r^{\dashv}_{m},l^{\vdash}_{m} \mu l^{\vdash}_{m'},r^{\dashv}_{m'} \mu r^{\vdash}_{m}), \\
		\vp(m) \vdash \vp(m') & = (l^{\vdash}_{m} \mu l^{\dashv}_{m'},r^{\dashv}_{m'} \mu r^{\dashv}_{m},l^{\vdash}_{m} \mu l^{\vdash}_{m'},r^{\vdash}_{m'} \mu r^{\dashv}_{m}),
	\end{align*}
	\noindent while
	\begin{align*}
		\vp(m \dashv m') & = (l^{\dashv}_{m \dashv m'} , r^{\dashv}_{m \dashv m'} , l^{\vdash}_{m \dashv m'} , r^{\vdash}_{m \dashv m'}), \\
		\vp(m \vdash m') & = (l^{\dashv}_{m \vdash m'} , r^{\dashv}_{m \vdash m'} , l^{\vdash}_{m \vdash m'} , r^{\vdash}_{m \vdash m'}).
	\end{align*}
	In order to show that $\varphi$ preserves both $\vdash$ and $\dashv$, the respective eight equalities can be readily checked by making use of the identities~\eqref{action_Di_2a}--\eqref{action_Di_2d} and~\eqref{action_Di_4a}--\eqref{action_Di_4f}.
	
	Regarding $\psi$, it is necessary to show that $\psi(p)$ satisfies the axioms from Definition~\ref{def_tetra_LDmu} for any $p \in P$. The fact that $(\lam_{p}^{\dashv},\rho_{p}^{\dashv},\lam_{p}^{\vdash},\rho_{p}^{\vdash})$ (respectively $(\ka_{p}^{\dashv},\om_{p}^{\dashv},\ka_{p}^{\vdash},\om_{p}^{\vdash})$) is a tetramultiplier of~$L$ (respectively $D$) follows immediately from the actions of $P$ on $L$ and $D$. The identities $\mu \lam^{\dashv} = \ka^{\dashv} \mu$, $\mu \rho^{\dashv} = \om^{\dashv} \mu$, $\mu \lam^{\vdash} = \ka^{\vdash} \mu$ and $\mu \rho^{\vdash} = \om^{\vdash} \mu$ are consequences of~\eqref{p_equivariant_Di_1} and~\eqref{p_equivariant_Di_2} respectively; while the remaining 30 identities in Definition~\ref{def_tetra_LDmu} follow from the compatibility of the actions of $P$ and $D$ on $L$. Moreover, due to the actions of $P$ on $D$ and $L$, $\psi$ is a homomorphism of dialgebras.
	
	Recall that
	\begin{align*}
		\Delta \vp (m) & = ((l^{\dashv}_m \mu, r^{\dashv}_m \mu, l^{\vdash}_m \mu, r^{\vdash}_m \mu),(\mu l^{\dashv}_m ,\mu r^{\dashv}_m ,\mu l^{\vdash}_m ,\mu r^{\vdash}_m)), \\
		\psi\eta (m) & = ((\lam^{\dashv}_{\eta(m)}, \rho^{\dashv}_{\eta(m)}, \lam^{\vdash}_{\eta(m)}, \rho^{\vdash}_{\eta(m)}),(\ka^{\dashv}_{\eta(m)} ,\om^{\dashv}_{\eta(m)} ,\ka^{\vdash}_{\eta(m)} ,\om^{\vdash}_{\eta(m)})),
	\end{align*}
	\noindent for any $m \in M$. The equality $\Delta \vp = \psi \eta$ can be readily checked from~\eqref{action_Di_1a}--\eqref{action_Di_1d},~\eqref{action_Di_2a}--\eqref{action_Di_2d} and the action of $M$ on $L$ and $D$ via $P$.
	
	Finally, we only need to check the behaviour of $(\vp,\psi)$ with the action of $P$ on $M$. Let $m \in M$ and $p \in P$. By~\eqref{eq_action_Xtetra_left1}--\eqref{eq_action_Xtetra_right2},
	\begin{align*}
		\psi(p) \dashv \vp(m) & = (\lam^{\dashv}_p l^{\vdash}_m,r^{\dashv}_m \om^{\dashv}_p,\lam^{\vdash}_p l^{\vdash}_m,r^{\dashv}_m \om^{\vdash}_p),\\
		\psi(p) \vdash \vp(m) & = (\lam^{\vdash}_p l^{\dashv}_m,r^{\dashv}_m \om^{\dashv}_p,\lam^{\vdash}_p l^{\vdash}_m,r^{\vdash}_m \om^{\dashv}_p),\\
		\vp(m) \dashv \psi(p) & = (l^{\dashv}_m \ka^{\vdash}_p,\rho^{\dashv}_p r^{\dashv}_m,l^{\vdash}_m \ka^{\vdash}_p,\rho^{\dashv}_p r^{\vdash}_m),\\
		\vp(m) \vdash \psi(p) & = (l^{\vdash}_m \ka^{\dashv}_p,\rho^{\dashv}_p r^{\dashv}_m,l^{\vdash}_m \ka^{\vdash}_p,\rho^{\vdash}_p r^{\dashv}_m),
	\end{align*}
	On the other hand, we know by definition that
	\begin{align*}
		\vp(p \dashv m) & = (l^{\dashv}_{p \dashv m},r^{\dashv}_{p \dashv m}, l^{\vdash}_{p \dashv m}, r^{\vdash}_{p \dashv m}), \\
		\vp(p \vdash m) & = (l^{\dashv}_{p \vdash m},r^{\dashv}_{p \vdash m}, l^{\vdash}_{p \vdash m}, r^{\vdash}_{p \vdash m}), \\
		\vp(m \dashv p) & = (l^{\dashv}_{m \dashv p},r^{\dashv}_{m \dashv p}, l^{\vdash}_{m \dashv p}, r^{\vdash}_{m \dashv p}), \\
		\vp(m \vdash p) & = (l^{\dashv}_{m \vdash p},r^{\dashv}_{m \vdash p}, l^{\vdash}_{m \vdash p}, r^{\vdash}_{m \vdash p}),
	\end{align*}
	\noindent Directly from~\eqref{action_Di_3a}--\eqref{action_Di_3l} one can easily see that the required identities between components hold. Then, $(\vp,\psi)$ is a homomorphism of crossed modules of dialgebras.
	
	Now let us see that at least one of the conditions~\eqref{condition1}--\eqref{condition4} is necessary to prove the converse statement. Assume that there is a homomorphism of crossed modules
	\begin{equation}\label{action_diagramDi}
		\xymatrix {
			M \ar[d]_{\vp}\ar[r]^{\eta} & P \ar[d]^{\psi} \\
			\Tetra(D,L) \ar[r]_{\Delta} & \Tetra(L, D, \mu)
		}
	\end{equation}
	For any $m \in M$ and $p \in P$, we denote $\psi(p)$ by $((\lam^{\dashv}_{p},\rho^{\dashv}_{p},\lam^{\vdash}_{p},\rho^{\vdash}_{p}),(\ka^{\dashv}_{p},\om^{\dashv}_{p},\ka^{\vdash}_{p},\om^{\vdash}_{p}))$ and $\vp(m)$ by $(l^{\dashv}_{m},r^{\dashv}_{m},l^{\vdash}_{m},r^{\vdash}_{m})$, satisfying the conditions from Definition~\ref{def_tetra_LDmu} and from Definition~\ref{def_tetra_DL} respectively. Moreover, by the definition of $\Delta$ (Proposition~\ref{theo_action_tetra_LDmu}), the commutativity of~\eqref{action_diagramDi} can be seen by the identity
	\begin{multline}\label{commutativity_action_Di}
		((\lam^{\dashv}_{\eta(m)},\rho^{\dashv}_{\eta(m)},\lam^{\vdash}_{\eta(m)},\rho^{\vdash}_{\eta(m)}),(\ka^{\dashv}_{\eta(m)},\om^{\dashv}_{\eta(m)},\ka^{\vdash}_{\eta(m)},\om^{\vdash}_{\eta(m)})) \\ = ((l^{\dashv}_{m} \mu,r^{\dashv}_{m} \mu,l^{\vdash}_{m} \mu,r^{\vdash}_{m} \mu),(\mu l^{\dashv}_{m},\mu r^{\dashv}_{m},\mu l^{\vdash}_{m},\mu r^{\vdash}_{m})),
	\end{multline}
	\noindent for all $m \in M$.
	
	We can define eight bilinear maps via $\psi$, two from $P \times L$ on $L$, two from $L \times P$ to $L$, two from $P \times D$ to $D$ and two from $D \times P$ to $D$, as follows:
	\begin{align*}
		p \dashv a & = \lam_{p}^{\dashv}(a), \qquad a \dashv p = \rho_{p}^{\dashv}(a), \qquad p \dashv x = \ka_{p}^{\dashv}(x), \qquad x \dashv p = \om_{p}^{\dashv}(x),\\
		p \vdash a & = \lam_{p}^{\vdash}(a), \qquad a \vdash p = \rho_{p}^{\vdash}(a), \qquad p \vdash x = \ka_{p}^{\vdash}(x), \qquad x \vdash p = \om_{p}^{\vdash}(x),
	\end{align*}
	for all $a\in L$, $x \in D$, $p \in P$. Since $(\lam^{\dashv}_{p},\rho^{\dashv}_{p},\lam^{\vdash}_{p},\rho^{\vdash}_{p}) \in \Tetra(L)$, $(\ka^{\dashv}_{p},\om^{\dashv}_{p},\ka^{\vdash}_{p},\om^{\vdash}_{p}) \in \Tetra(D)$ and $\psi$ is a homomorphism of dialgebras, it can be easily checked that some equalities stated in Definition~\ref{def_action_dial} are satisfied by the previous bilinear maps, but others are not. In particular, for all~$a \in L$, $p,q \in P$, we get the following:
	\begin{align}
		(a \vdash p) \vdash q = \rho^{\vdash}_{q} \rho^{\vdash}_{p} (a) & \neq \rho^{\vdash}_{q} \rho^{\dashv}_{p} (a) = \rho^{\vdash}_{p \vdash q} = a \vdash (p \vdash q), \label{eq_Di_action_20}\tag{D2.2a}	\\
		(p \dashv a) \dashv q = \rho^{\dashv}_{q} \lam^{\dashv}_{p} (a) & \neq \lam^{\dashv}_{p} \rho^{\vdash}_{q} (a) = p \dashv (a \vdash q), \label{eq_Di_action_21}\tag{D2.2b}	\\
		(p \dashv a) \dashv q = \rho^{\dashv}_{q} \lam^{\dashv}_{p} (a) & \neq \lam^{\dashv}_{p} \rho^{\dashv}_{q} (a) = p \dashv (a \dashv q), \label{eq_Di_action_22}\tag{D2.2c}	\\
		(p \vdash a) \dashv q = \rho^{\dashv}_{q} \lam^{\vdash}_{p} (a) & \neq \lam^{\vdash}_{p} \rho^{\dashv}_{q} (a) = p \vdash (a \dashv q), \label{eq_Di_action_23}\tag{D2.2d}	\\
		(p \dashv a) \vdash q = \rho^{\vdash}_{q} \lam^{\dashv}_{p} (a) & \neq \lam^{\vdash}_{p} \rho^{\vdash}_{q} (a) = p \vdash (a \vdash q), \label{eq_Di_action_24}\tag{D2.2d}	\\
		(p \vdash a) \vdash q = \rho^{\vdash}_{q} \lam^{\vdash}_{p} (a) & \neq \lam^{\vdash}_{p} \rho^{\vdash}_{q} (a) = p \vdash (a \vdash q), \label{eq_Di_action_25}\tag{D2.2e}	\\
		(p \dashv q) \dashv a = \lam^{\dashv}_{p \dashv q} = \lam^{\dashv}_{p} \lam^{\vdash}_{q} (a) & \neq \lam^{\dashv}_{q} \lam^{\dashv}_{p} (a) = p \dashv (q \dashv q), \label{eq_Di_action_27}\tag{D2.2f}		
	\end{align}
	\noindent and the analogous inequalities for all $x \in D$, $p, q \in P$, with $\ka$ and $\om$ instead of $\lam$ and $\rho$ respectively. Nevertheless, if at least one of the conditions~\ref{condition1}--\ref{condition4} holds, then either $\Ann(L)= 0$ or $L \dashv L = L = L \vdash L$ and, simultaneously, either $\Ann(D)= 0$ or~$D \dashv D = D = D \vdash D$. Conditions on $L$ (respectively~$D$) guarantee the equalities~\eqref{eq_Di_action_20}--\eqref{eq_Di_action_25} and~\eqref{eq_Di_action_27} for $\lam$ and $\rho$ (respectively $\ka$ and $\om$), so that $P$ acts on $L$ (respectively~$D$). Calculations are fairly straightforward and follow from the properties satisfied by tetramultipliers of $L$ (respectively $D$). Let us assume that $\Ann(L)=0$. Therefore it would be enough to prove that $\rho^{\vdash}_{q} \rho^{\vdash}_{p} (a) - \rho^{\vdash}_{q} \rho^{\dashv}_{p} (a)$, $\rho^{\dashv}_{q} \lam^{\dashv}_{p} (a) - \lam^{\dashv}_{p} \rho^{\vdash}_{q} (a)$, $\rho^{\dashv}_{q} \lam^{\dashv}_{p} (a) - \lam^{\dashv}_{p} \rho^{\dashv}_{q} (a)$, $\rho^{\dashv}_{q} \lam^{\vdash}_{p} (a) - \lam^{\vdash}_{p} \rho^{\dashv}_{q} (a)$, $\rho^{\vdash}_{q} \lam^{\dashv}_{p} (a) - \lam^{\vdash}_{p} \rho^{\vdash}_{q} (a)$, $\rho^{\vdash}_{q} \lam^{\vdash}_{p} (a) - \lam^{\vdash}_{p} \rho^{\vdash}_{q} (a)$, $\lam^{\dashv}_{p} \lam^{\vdash}_{q} (a) - \lam^{\dashv}_{q} \lam^{\dashv}_{p} (a) \in \Ann(L)$. As an example, we show here that $\rho^{\dashv}_{q} \lam^{\vdash}_{p} (a) - \lam^{\vdash}_{p} \rho^{\dashv}_{q} (a) \in \Ann(L)$. Let $b \in L$. Then
	\begin{equation*}
		\begin{split}
			& \rho^{\dashv}_{q} \lam^{\vdash}_{p} (a) \dashv b - \lam^{\vdash}_{p} \rho^{\dashv}_{q}(a) \dashv b \\
			\overset{(7)}{=} & \lam^{\vdash}_{p}(a) \dashv \lam^{\dashv}_{q}(b) - \lam^{\vdash}_{p} \rho^{\dashv}_{q}(a) \dashv b \\
			\overset{(3)}{=} & \lam^{\vdash}_{p}(a) \dashv \lam^{\dashv}_{q}(b) - \lam^{\vdash}_{p} (\rho^{\dashv}_{q}(a) \dashv b) \\
			\overset{(7)}{=} & \lam^{\vdash}_{p}(a) \dashv \lam^{\dashv}_{q}(b) - \lam^{\vdash}_{p} (a \dashv \lam^{\dashv}_{q}(b)) \\
			\overset{(3)}{=} & \lam^{\vdash}_{p}(a) \dashv \lam^{\dashv}_{q}(b) - \lam^{\vdash}_{p} (a) \dashv \lam^{\dashv}_{q}(b) = 0,
		\end{split}
		\qquad \qquad
		\begin{split}
			& \rho^{\dashv}_{q} \lam^{\vdash}_{p} (a) \vdash b - \lam^{\vdash}_{p} \rho^{\dashv}_{q}(a) \vdash b \\
			\overset{(9)}{=} & \lam^{\vdash}_{p} (a) \vdash \lam^{\vdash}_{q}(b) - \lam^{\vdash}_{p} \rho^{\dashv}_{q}(a) \vdash b \\
			\overset{(5)}{=} & \lam^{\vdash}_{p} (a) \vdash \lam^{\vdash}_{q}(b) - \lam^{\vdash}_{p} (\rho^{\dashv}_{q}(a) \vdash b) \\
			\overset{(9)}{=} & \lam^{\vdash}_{p} (a) \vdash \lam^{\vdash}_{q}(b) - \lam^{\vdash}_{p} (a \vdash \lam^{\vdash}_{q}(b)) \\
			\overset{(5)}{=} & \lam^{\vdash}_{p} (a) \vdash \lam^{\vdash}_{q}(b) - \lam^{\vdash}_{p} (a) \vdash \lam^{\vdash}_{q}(b) = 0,
		\end{split}
	\end{equation*}
	\\
	\begin{equation*}
		\begin{split}
			& b \dashv \rho^{\dashv}_{q} \lam^{\vdash}_{p} (a) - b \dashv \lam^{\vdash}_{p} \rho^{\dashv}_{q}(a) \\
			\overset{(6)}{=} & b \dashv \rho^{\dashv}_{q} \lam^{\vdash}_{p} (a) - \rho^{\dashv}_{p}(b) \dashv \rho^{\dashv}_{q}(a) \\
			\overset{(12)}{=} & \rho^{\dashv}_{q}(b \dashv \lam^{\vdash}_{p} (a)) - \rho^{\dashv}_{p}(b) \dashv \rho^{\dashv}_{q}(a) \\
			\overset{(6)}{=} & \rho^{\dashv}_{q}(\rho^{\dashv}_{p}(b) \dashv a) - \rho^{\dashv}_{p}(b) \dashv \rho^{\dashv}_{q}(a) \\
			\overset{(12)}{=} & \rho^{\dashv}_{p}(b) \dashv \rho^{\dashv}_{q}(a) - \rho^{\dashv}_{p}(b) \dashv \rho^{\dashv}_{q}(a) = 0,
		\end{split}
		\qquad \qquad
		\begin{split}
			& b \vdash \rho^{\dashv}_{q} \lam^{\vdash}_{p} (a) - b \vdash \lam^{\vdash}_{p} \rho^{\dashv}_{q}(a) \\
			\overset{(9)}{=} & b \vdash \rho^{\dashv}_{q} \lam^{\vdash}_{p} (a) - \rho^{\dashv}_{p}(b) \vdash \rho^{\dashv}_{q}(a) \\
			\overset{(13)}{=} & \rho^{\dashv}_{q}(b \vdash \lam^{\vdash}_{p} (a)) - \rho^{\dashv}_{p}(b) \vdash \rho^{\dashv}_{q}(a) \\
			\overset{(9)}{=} & \rho^{\dashv}_{q}(\rho^{\dashv}_{p}(b) \vdash a) - \rho^{\dashv}_{p}(b) \vdash \rho^{\dashv}_{q}(a) \\
			\overset{(13)}{=} & \rho^{\dashv}_{p}(b) \vdash \rho^{\dashv}_{q}(a) - \rho^{\dashv}_{p}(b) \vdash \rho^{\dashv}_{q}(a) = 0.
		\end{split}
	\end{equation*}
	
	On the other hand, if we assume that $L \dashv L = L = L \vdash L$, any element $a$ in $L$ can be expressed either as a linear combination of left products $b \dashv c$ or right products $b \vdash c$ in~$L$. Bearing that in mind, it would be sufficient to show that the identities
	\begin{align*}
		& \rho^{\vdash}_{q} \rho^{\vdash}_{p} (a) = \rho^{\vdash}_{q} \rho^{\dashv}_{p} (a), \ \rho^{\dashv}_{q} \lam^{\dashv}_{p} (a) = \lam^{\dashv}_{p} \rho^{\vdash}_{q} (a), \
		\rho^{\dashv}_{q} \lam^{\dashv}_{p} (a) = \lam^{\dashv}_{p} \rho^{\dashv}_{q} (a), \rho^{\dashv}_{q} \lam^{\vdash}_{p} (a) = \lam^{\vdash}_{p} \rho^{\dashv}_{q} (a), \\
		& \rho^{\vdash}_{q} \lam^{\dashv}_{p} (a) = \lam^{\vdash}_{p} \rho^{\vdash}_{q} (a), \ \rho^{\vdash}_{q} \lam^{\vdash}_{p} (a) = \lam^{\vdash}_{p} \rho^{\vdash}_{q} (a) \ \ \text{and} \ \
		\lam^{\dashv}_{p} \lam^{\vdash}_{q} (a) = \lam^{\dashv}_{q} \lam^{\dashv}_{p} (a)
	\end{align*}
	\noindent hold when $a$ is either of the form $b \dashv c$ or $b \vdash c$, given $p, q \in P$. Actually, straightforward calculations, using the conditions satisfied by $\psi(p)$ and $\psi(q)$, show that:
	\begin{align*}
		& \rho^{\vdash}_{q} \rho^{\vdash}_{p}(b \dashv c) = \rho^{\vdash}_{q} \rho^{\dashv}_{p}(b \dashv c), \ \rho^{\dashv}_{q} \lam^{\dashv}_{p}(b \ast c) = \lam^{\dashv}_{p} \rho^{\vdash}_{q}(b \ast c), \
		\rho^{\dashv}_{q} \lam^{\dashv}_{p}(b \ast c) = \lam^{\dashv}_{p} \rho^{\dashv}_{q} (b \ast c), \\
		& \rho^{\dashv}_{q} \lam^{\vdash}_{p}(b \ast c) = \lam^{\vdash}_{p} \rho^{\dashv}_{q}(b \ast c), \ \rho^{\vdash}_{q} \lam^{\dashv}_{p}(b \ast c) = \lam^{\vdash}_{p} \rho^{\vdash}_{q}(b \ast c), \ \rho^{\vdash}_{q} \lam^{\vdash}_{p}(b \ast c) = \lam^{\vdash}_{p} \rho^{\vdash}_{q}(b \ast c) \\
		& \text{and} \ \ 	\lam^{\dashv}_{p} \lam^{\vdash}_{q}(b \vdash c) = \lam^{\dashv}_{q} \lam^{\dashv}_{p} (b \vdash c),
	\end{align*}
	\noindent but in general $\rho^{\vdash}_{q} \rho^{\vdash}_{p}(b \vdash c) \neq \rho^{\vdash}_{q} \rho^{\dashv}_{p}(b \vdash c)$ and $\lam^{\dashv}_{p} \lam^{\vdash}_{q}(b \dashv c) = \lam^{\dashv}_{q} \lam^{\dashv}_{p} (b \dashv c)$, for all $b, c \in L$. Therefore the result would not be necessarily true under the hypothesis $L \dashv L = L$ or~$L \vdash L=L$.
	
	Concerning~\eqref{p_equivariant_Di_1} and~\eqref{p_equivariant_Di_2}, they are direct consequences from~\eqref{axiom_tetra_LDmu_2} (recall that, by hypothesis, $((\lam^{\dashv}_{p},\rho^{\dashv}_{p},\lam^{\vdash}_{p},\rho^{\vdash}_{p}),(\ka^{\dashv}_{p},\om^{\dashv}_{p},\ka^{\vdash}_{p},\om^{\vdash}_{p}))$ is a tetramultiplier of $(L,D,\mu)$ for any~${p \in P}$). Similarly, compatibility of the actions of $P$ and $D$ on $L$ follows almost immediately from the 30 identities stated in Definition~\ref{def_tetra_LDmu}. Hence, (i) holds.
	
	Regarding (ii), we can define $\xi^{\dashv}_1(m,x)= l^{\dashv}_{m}(x)$, $\xi^{\vdash}_1(m,x)= l^{\vdash}_{m}(x)$, $\xi^{\dashv}_2(x,m)= r^{\dashv}_{m}(x)$ and~$\xi^{\vdash}_2(x,m)= r^{\vdash}_{m}(x)$ for all $m \in M$, $x \in D$. Then, $\xi_1$ and $\xi_2$ are clearly bilinear. Identities~\eqref{action_Di_1a}--\eqref{action_Di_1d} and~\eqref{action_Di_2a}--\eqref{action_Di_2d} follow automatically from the identity~\eqref{commutativity_action_Di} together with the fact that the actions of $M$ on $L$ and $D$ are induced by the actions of $P$ via $\eta$.
	
	Identities~\eqref{action_Di_5a}--\eqref{action_Di_5h} are just a direct consequence of the fact that, by hypothesis, $(l^{\dashv}_{m},r^{\dashv}_{m},l^{\vdash}_{m},r^{\vdash}_{m})$ is a tetramultiplier from $D$ to $L$ for any $m \in M$.
	
	Since $\vp$ is a homomorphism of dialgebras, we know that
	\begin{align*}
		(l^{\dashv}_{m \dashv n},r^{\dashv}_{m \dashv n},l^{\vdash}_{m \dashv n},r^{\vdash}_{m \dashv n}) & = (l_m^{\dashv} \mu l_n^{\vdash},r_n^{\dashv} \mu r_m^{\dashv},l_m^{\vdash} \mu l_n^{\vdash},r_n^{\dashv} \mu r_m^{\vdash}), \\
		(l^{\dashv}_{m \dashv n},r^{\dashv}_{m \dashv n},l^{\vdash}_{m \dashv n},r^{\vdash}_{m \dashv n}) & = (l_m^{\vdash} \mu l_n^{\dashv},r_n^{\dashv} \mu r_m^{\dashv},l_m^{\vdash} \mu l_n^{\vdash},r_n^{\vdash} \mu r_m^{\dashv}).
	\end{align*}
	{These identities}, together with~\eqref{action_Di_2a}--\eqref{action_Di_2d}, allow us to easily prove that~\eqref{action_Di_4a}--\eqref{action_Di_4f} hold.
	
	Recall that since $(\vp,\psi)$ is a homomorphism of crossed modules { of} dialgebras, we have:
	\begin{align*}
		(l^{\dashv}_{p \dashv m},r^{\dashv}_{p \dashv m}, l^{\vdash}_{p \dashv m}, r^{\vdash}_{p \dashv m}) & = (\lam^{\dashv}_p l^{\vdash}_m,r^{\dashv}_m \om^{\dashv}_p,\lam^{\vdash}_p l^{\vdash}_m,r^{\dashv}_m \om^{\vdash}_p),\\
		(l^{\dashv}_{p \vdash m},r^{\dashv}_{p \vdash m}, l^{\vdash}_{p \vdash m}, r^{\vdash}_{p \vdash m}) & = (\lam^{\vdash}_p l^{\dashv}_m,r^{\dashv}_m \om^{\dashv}_p,\lam^{\vdash}_p l^{\vdash}_m,r^{\vdash}_m \om^{\dashv}_p),\\
		(l^{\dashv}_{m \dashv p},r^{\dashv}_{m \dashv p}, l^{\vdash}_{m \dashv p}, r^{\vdash}_{m \dashv p}) & = (l^{\dashv}_m \ka^{\vdash}_p,\rho^{\dashv}_p r^{\dashv}_m,l^{\vdash}_m \ka^{\vdash}_p,\rho^{\dashv}_p r^{\vdash}_m),\\
		(l^{\dashv}_{m \vdash p},r^{\dashv}_{m \vdash p}, l^{\vdash}_{m \vdash p}, r^{\vdash}_{m \vdash p}) & = (l^{\vdash}_m \ka^{\dashv}_p,\rho^{\dashv}_p r^{\dashv}_m,l^{\vdash}_m \ka^{\vdash}_p,\rho^{\vdash}_p r^{\dashv}_m),
	\end{align*}
	Identities~\eqref{action_Di_3a}--\eqref{action_Di_3l} follow immediately from the previous identities. Hence, (ii) holds.
\end{proof}
\begin{example}\label{canonical_morph}
	Let $(M,P,\eta) \in \XDi$, there is a homomorphism $(\vp,\psi) \colon (M,P,\eta) \to \ol{\Act}(M,P,\eta)$, with $\vp(m) = (l^{\dashv}_{m},r^{\dashv}_{m},l^{\vdash}_{m},r^{\vdash}_{m})$ and $\psi(p) = ((\lam^{\dashv}_{p},\rho^{\dashv}_{p},\lam^{\vdash}_{p},\rho^{\vdash}_{p}),(\ka^{\dashv}_{p},\om^{\dashv}_{p},\ka^{\vdash}_{p},\om^{\vdash}_{p}))$, where
	\begin{equation*}
		l^{\dashv}_{m} (p) = p \dashv m, \qquad r^{\dashv}_{m} (p) = m \dashv p, \qquad l^{\vdash}_{m} (p) = p \vdash m, \qquad r^{\vdash}_{m} (p) = m \vdash p
	\end{equation*}
	\noindent and
	\begin{align*}
		\lam^{\dashv}_{p} (m) & = m \dashv p, \qquad \rho^{\dashv}_{p} (m) = p \dashv m, \qquad \lam^{\vdash}_{p} (m) = m \vdash p, \qquad \rho^{\vdash}_{p} (m) = p \vdash m, \\
		\ka^{\dashv}_{p} (q) & = q \dashv p, \qquad \om^{\dashv}_{p} (q) = p \dashv q, \qquad \ka^{\vdash}_{p} (q) = q \vdash p, \qquad \om^{\vdash}_{p} (q) = p \vdash q,
	\end{align*}
	\noindent for all $m \in M$, $p,q \in P$. The computations in order to prove that $(\vp,\psi)$ is a homomorphism of crossed modules of dialgebras are fairly straightforward. This homomorphism does not necessarily define a set of actions valid to construct the semidirect product. Nevertheless, Theorem~\ref{theo_equiv_XDi_action}, together with the result immediately bellow, show that if $(M,P,\eta)$ satisfies at least one of the conditions~\eqref{condition1}--\eqref{condition4}, then the previous homomorphism does define an appropriate set of actions of $(M,P,\eta)$ on itself.
\end{example}
Let us consider $(M,P,\eta)$ and $(L,D,\mu)$ to be crossed modules of dialgebras such that~(i) and (ii) from Theorem~\ref{theo_equiv_XDi_action} hold. Then, there are actions of $M$ on $L$ and of $P$ on $D$, so we can consider the semidirect products of dialgebras $L \rtimes M$ and $D \rtimes P$. Moreover, we get the following theorem.
\begin{theorem}\label{theo_semi_XDi}
	There is an action of the dialgebra $D \rtimes P$ on the dialgebra $L \rtimes M$, defined by
	\begin{align}
		(x,p) \dashv (a,m) & = (x \dashv a + p \dashv a + \xi^{\dashv}_2(x,m),p \dashv m), \label{action_semidirect_XDi_left_1} \\
		(x,p) \vdash (a,m) & = (x \vdash a + p \vdash a + \xi^{\vdash}_2(x,m),p \vdash m), \label{action_semidirect_XDi_left_2} \\
		(a,m) \dashv (x,p) & = (a \dashv x + a \dashv p + \xi^{\dashv}_1(m,x),m \dashv p), \label{action_semidirect_XDi_right_1} \\
		(a,m) \vdash (x,p) & = (a \vdash x + a \vdash p + \xi^{\vdash}_1(m,x),m \vdash p), \label{action_semidirect_XDi_right_2}
	\end{align}
	\noindent for all $(x,p) \in D \rtimes P$, $(a,m) \in L \rtimes M$, with $\xi^{\dashv}_1$, $\xi^{\vdash}_1,$ $\xi^{\dashv}_2$ and $\xi^{\vdash}_2$ as in Theorem~\ref{theo_equiv_XDi_action}. Furthermore, the homomorphism of dialgebras $(\mu,\eta) \colon L \rtimes M \to D \rtimes P$, defined by
	\[
	(\mu,\eta)(a,m)=(\mu(a),\eta(m)),
	\]
	\noindent for any $(a,m)\in L \rtimes M$, together with the above action, is a crossed module of dialgebras.
\end{theorem}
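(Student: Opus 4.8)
The plan is to verify the claim in three stages: first, that the four formulas \eqref{action_semidirect_XDi_left_1}--\eqref{action_semidirect_XDi_right_2} satisfy the thirty identities of Definition~\ref{def_action_dial}; second, that $(\mu,\eta)$ is a homomorphism of dialgebras; and third, that $(\mu,\eta)$ together with this action satisfies the equivariance~\eqref{XDi1} and Peiffer~\eqref{XDi2} axioms. A single preliminary remark streamlines all three stages: in every identity to be checked, the second coordinate (valued in $M$ or $P$) depends only on the $P$-action on $M$ and on the crossed module structure of $(M,P,\eta)$, so it holds by the corresponding identity for that crossed module. The genuine content therefore lies in the first coordinate, valued in $L$.

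I expect the first stage to be the main obstacle. For it I would take each of the thirty mixed instances of \eqref{Di1}--\eqref{Di5}, with the arguments distributed among $D\rtimes P$ and $L\rtimes M$, and expand the $L$-valued coordinate by means of \eqref{action_semidirect_XDi_left_1}--\eqref{action_semidirect_XDi_right_2}. After expansion the resulting terms fall into three families: the pure $D$-on-$L$ terms, which match because the $D$-action is a dialgebra action inherited from $(L,D,\mu)$; the pure $P$-on-$L$ terms, which match because the $P$-action on $L$ granted by~(i) is a dialgebra action; and the cross terms, which mix the $P$- and $D$-actions with the maps $\xi^{\dashv}_1$, $\xi^{\vdash}_1$, $\xi^{\dashv}_2$, $\xi^{\vdash}_2$. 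The difficulty is precisely that each cross term must be matched individually, and this is what the compatibility of the $P$- and $D$-actions on $L$ from~(i), together with the identities \eqref{action_Di_3a}--\eqref{action_Di_3l}, \eqref{action_Di_4a}--\eqref{action_Di_4f} and \eqref{action_Di_5a}--\eqref{action_Di_5i}, is engineered to supply; the real labour is the bookkeeping that assigns to each cross term the $\xi$-identity that cancels it.

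For the second stage I would evaluate $(\mu,\eta)$ on a product in $L\rtimes M$ and compare it with the product of the images in $D\rtimes P$. The diagonal contributions agree because $\mu$ and $\eta$ are dialgebra homomorphisms, while the two cross contributions $\mu(m\ast a)$ and $\mu(a\ast m)$ become $\eta(m)\ast\mu(a)$ and $\mu(a)\ast\eta(m)$ by the $P$-equivariance \eqref{p_equivariant_Di_1}--\eqref{p_equivariant_Di_2} read through $\eta$.

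For the third stage, to establish equivariance~\eqref{XDi1} I would apply $(\mu,\eta)$ to each of \eqref{action_semidirect_XDi_left_1}--\eqref{action_semidirect_XDi_right_2}: the summands $\mu(x\ast a)$ and $\mu(p\ast a)$ turn into $x\ast\mu(a)$ and $p\ast\mu(a)$ by the equivariance of $(L,D,\mu)$ and by \eqref{p_equivariant_Di_1}--\eqref{p_equivariant_Di_2}, whereas $\mu\xi^{\ast}_i(\cdot,\cdot)$ becomes exactly the remaining term $x\ast\eta(m)$ of the $D\rtimes P$-action by \eqref{action_Di_1a}--\eqref{action_Di_1d}; the second coordinate matches by equivariance of $(M,P,\eta)$. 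For Peiffer~\eqref{XDi2} I would substitute $(\mu(a),\eta(m))$ into the action and use \eqref{action_Di_2a}--\eqref{action_Di_2d} to rewrite $\xi^{\ast}_i(\mu(a),m)$ as the product $a\ast m$, together with the Peiffer identities of $(L,D,\mu)$ and $(M,P,\eta)$ and the definition of the $M$-action on $L$ as the $P$-action composed with $\eta$; this recovers the multiplication of $L\rtimes M$ and yields both halves of~\eqref{XDi2}.
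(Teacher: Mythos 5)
Your proposal is correct and follows essentially the same route as the paper's proof: it verifies the thirty action identities coordinate by coordinate from the conditions of Theorem~\ref{theo_equiv_XDi_action} (with the second, $M$- or $P$-valued coordinate trivially inherited from $(M,P,\eta)$), checks that $(\mu,\eta)$ is a dialgebra homomorphism via \eqref{p_equivariant_Di_1}--\eqref{p_equivariant_Di_2}, and derives equivariance from \eqref{action_Di_1a}--\eqref{action_Di_1d} and the Peiffer identity from \eqref{action_Di_2a}--\eqref{action_Di_2d}, exactly as the paper does. The only imprecision is in your stage-one tool list: the cross terms in which $M$ acts on $L$ (for instance $x \dashv (m \vdash b)$) are not settled by the action compatibility and \eqref{action_Di_3a}--\eqref{action_Di_5i} alone, but also require \eqref{action_Di_1a}--\eqref{action_Di_1d} combined with the Peiffer identity of $\mu$ and the equivariance of $\eta$, which is precisely what the paper's worked example of the third identity illustrates.
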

\begin{proof}
	The 30 identities that have to be checked in order to prove that~\eqref{action_semidirect_XDi_left_1}--\eqref{action_semidirect_XDi_right_2} define an action of dialgebras follow simply from the conditions satisfied by the crossed modules~$(M,P,\eta)$ and~$(L,D,\mu)$ (see Theorem~\ref{theo_equiv_XDi_action}). In any case, as an example, we will show how to prove the third one. The computations for the rest of the identities are similar. Let~$(a,m),(b,n') \in L \rtimes M$ and~$(x,p) \in D \rtimes P$. Then, we get that
	\begin{multline*}
		((x,p) \dashv (a,m)) \dashv (b,n) = \big(\underbrace{(x \dashv a) \dashv b}_{(1)} + \underbrace{(p \dashv a) \dashv b}_{(2)} + \underbrace{\xi^{\dashv}_2(x,m) \dashv b}_{(3)} + \underbrace{(p \dashv m) \dashv b}_{(4)} \\ + \underbrace{(x \dashv a) \dashv n}_{(5)} + \underbrace{(p \dashv a) \dashv n}_{(6)} + \underbrace{\xi^{\dashv}_2(x,m) \dashv n}_{(7)}, \underbrace{(p \dashv m) \dashv n}_{(8)}\big),\\
		(x,p) \dashv ((a,m) \vdash (b,n)) = \big(\underbrace{x \dashv (a \vdash b)}_{(1')} + \underbrace{p \dashv (a \vdash b)}_{(2')} + \underbrace{x \dashv (m \vdash b)}_{(3')} + \underbrace{p \dashv (m \vdash b)}_{(4')} \\ + \underbrace{x \dashv (a \vdash n)}_{(5')} + \underbrace{p \dashv (a \vdash n)}_{(6')} + \underbrace{\xi^{\dashv}_2(x,m \vdash n) \dashv n}_{(7')}, \underbrace{p \dashv (m \vdash n)}_{(8')}\big).
	\end{multline*}
	
	We want to show that $(i) = (i')$, for $i=1,\dots,8$. It is immediate for $i=1,2,8$ due to the action of~$D$ on~$L$ and the actions of~$P$ on~$L$ and~$M$. For $i=7$, it suffices to use~\eqref{action_Di_4d}. For $i=3$, the identity is deduced from the fact that the action of $M$ on $L$ and $D$ is defined via $\eta$ together with the compatibility of the action of~$P$ and~$D$ on~$L$,~\eqref{action_Di_1a} and the Peiffer identity of~$\mu$. Namely,
	\begin{align*}
		x \dashv (m \vdash b) & = x \dashv (\eta(m) \vdash b) = (x \dashv \eta(m)) \dashv b = (x \dashv m) \dashv b\\
		& = \mu \xi^{\dashv}_2(x,m) \dashv b = \xi^{\dashv}_2(x,m) \dashv b.
	\end{align*}
	For $i=4$, it is necessary to use of the equivariance of $\eta$, the definition of the action of $M$ on $L$ via $\eta$ and the action of $P$ on $L$:
	\begin{align*}
		p \dashv (m \vdash b) & = p \dashv (\eta(m) \vdash b) = (p \dashv \eta(m)) \dashv b = \eta(p \dashv m) \dashv b = (p \dashv m) \dashv b
	\end{align*}
	For $i=5$ and $i=6$ the equalities can be easily proved by using the definition of the action of $M$ on $L$ via $\eta$ and the compatibility of the action of $P$ and $D$ on $L$.
	
	Seeing that $(\mu,\eta)$ is a homomorphism of dialgebras follows immediately from the definition of the action of $M$ on $L$ via $\eta$ together with conditions~\eqref{p_equivariant_Di_1} and~\eqref{p_equivariant_Di_2}. Concerning the equivariance of $(\mu,\eta)$, for any $(a,m)\in L \rtimes M$ and $(x,p) \in D \rtimes P$,
	\begin{align*}
		(\mu,\eta) ((x,p) \dashv (a,m)) & = (\mu,\eta) (x \dashv a + p \dashv a + \xi^{\dashv}_2(x,m), p \dashv m)\\
		& = (\mu (x \dashv a) + \mu(p \dashv a) + \mu\xi^{\dashv}_2(x,m), \eta(p \dashv m))\\
		& = (x \dashv \mu(a) + p \dashv \mu(a) + x \dashv \eta(m), p \dashv \eta(m))\\
		& = (x, p) \dashv (\mu (a), \eta(m))\\
		& = (x, p) \dashv ((\mu, \eta) (a,m)),
	\end{align*}
	\noindent by the equivariance of $\mu$ and $\eta$,~\eqref{p_equivariant_Di_1},~\eqref{action_Di_1a} and the definition of the action of $M$ on $D$ via $\eta$. The remaining identities can be proved similarly.
	
	The Peiffer identity of the pair $(\mu,\eta)$ follows directly from the homonymous property of~$\mu$ and $\eta$, the definition of the action of $M$ on $L$ via~$\eta$ and conditions~\eqref{action_Di_2a}--\eqref{action_Di_2d}.
\end{proof}
\begin{definition}
	The crossed module of dialgebras $(L \rtimes M, D \rtimes P,(\mu, \eta))$ is called the
	\emph{semidirect product} of the crossed modules of dialgebras $(L,D,\mu)$ and $(M,P,\eta)$.
\end{definition}
Note that, as usual, from the semidirect product we can determine a split extension of $(M,P,\eta)$ by $(L,D,\mu)$
\[
\xymatrix {
	(0,0,0) \ar[r] & (L,D,\mu) \ar[r] & (L \rtimes M, D \rtimes P,(\mu, \eta)) \ar@<0.7ex>[r] & (M,P,\eta) \ar[r] \ar@<0.7ex>[l] & (0,0,0)
}
\]
Conversely, any split extension of $(M,P,\eta)$ by $(L,D,\mu)$ is isomorphic to
its semidirect product, where the action of $(M,P,\eta)$ on $(L,D,\mu)$ is induced by the section.

\begin{remark}
	Let $(M,P,\eta)$ and $(L,D,\mu)$ be crossed modules of dialgebras satisfying at least one of the conditions (CON1)-(CON 4). Then,
	an action of $(M,P,\eta)$ on $(L,D,\mu)$ can also be defined as a homomorphism of crossed modules of dialgebras from $(M,P,\eta)$ to $\ol{\Act}(L,D,\mu)$. This means that, under one of those conditions, $\ol{\Act}(L,D,\mu)$ is the actor of $(L,D,\mu)$ and it can be denoted simply by $\Act(L,D,\mu)$.
	
	It is also relevant to note that the generalization of the actor to the category of crossed modules of dialgebras is somehow smoother than the one in the case of crossed modules of Leibniz algebras, as in this case there is no need of the extra conditions in Theorem~\ref{theo_equiv_XDi_action}~(ii) (see~\cite{CaFCGMKh, FCPhD}) for the equivalent set of conditions in the case of Leibniz algebras and a fourth possible condition makes the converse statement true, which does not work in the case of Leibniz algebras, that is $L \dashv L = L = L \vdash L$ and $\Ann(D)=0$.
\end{remark}

\section{Center of a crossed module of dialgebras}\label{section5}

As a conclusion to all the previous considerations and results, let us consider $(L,D,\mu)$ as a crossed module of dialgebras satisfying at least one of the conditions~\eqref{condition1}--\eqref{condition4}. Let $\Z(D)$ be the center of the dialgebra $D$, which in this case coincides with its annihilator (although the center and the annihilator may not be equal in general). We consider the canonical homomorphism $(\vp,\psi)$ from $(L,D,\mu)$ to $\Act(L,D,\mu)$, as in Example~\ref{canonical_morph}. It follows that
\begin{align*}
	\Ker(\vp) & = L^{D} \quad \text{and} \quad	\Ker(\psi) = \st_{D}(L) \cap \Z(D),
\end{align*}
where
\begin{align*}
	L^{D} &= \{a \in L \, | \, x \dashv a = x \vdash a = a \dashv x = a \vdash x = 0, \ \text{for all} \ x \in D\} \\
	\st_{D}(L) &= \{x \in D \, | \, x \dashv a = x \vdash a = a \dashv x = a \vdash x = 0, \ \text{for all} \ a \in L\}
\end{align*}

Thus, the kernel of $(\vp,\psi)$ is the crossed module of dialgebras $(L^{D}, \st_{D}(L) \cap \Z(D),\mu)$. Therefore, the kernel of $(\vp,\psi)$ is exactly the center of the crossed module $(L,D,\mu)$, as defined in \cite[Definition 27]{AsCeUs} for crossed modules in modified categories of interest. This is in fact the categorical notion of center defined by Huq~\cite{Huq} confirming that our construction of the actor for a crossed module of dialgebras is consistent.

\begin{example} Let $(L,D,\iota)$ be the crossed module where $L$ is an ideal of $D$ and $\iota$ is the inclusion. Then, its center is obtained by the crossed module $(L \cap \Z(D), \Z(D), \iota)$. In particular, the center of $(0,D,0)$ is $(0, \Z(D), 0)$ and the center of $(D,D,\id_{D})$ is $(\Z(D), \Z(D), \id)$.
\end{example}


\end{document}